\begin{document}

\allowdisplaybreaks

\newif\ifdraft 
\drafttrue
\newcommand{\DRAFTNUMBER}{1}
\newcommand{\DATE}{\today\ \ifdraft(Draft \DRAFTNUMBER)\fi}
\newcommand{\TITLE}{Variation of Periods Modulo $p$ in Arithmetic Dynamics}
\newcommand{\TITLERUNNING}{Periods Modulo $p$ in Arithmetic Dynamics}



\newtheorem{theorem}{Theorem}
\newtheorem{lemma}[theorem]{Lemma}
\newtheorem{conjecture}[theorem]{Conjecture}
\newtheorem{proposition}[theorem]{Proposition}
\newtheorem{corollary}[theorem]{Corollary}
\newtheorem*{claim}{Claim}

\theoremstyle{definition}
\newtheorem{question}{Question}
\renewcommand{\thequestion}{\Alph{question}} 
\newtheorem*{definition}{Definition}
\newtheorem{example}[theorem]{Example}

\theoremstyle{remark}
\newtheorem{remark}[theorem]{Remark}
\newtheorem*{acknowledgement}{Acknowledgements}



\newenvironment{notation}[0]{%
  \begin{list}%
    {}%
    {\setlength{\itemindent}{0pt}
     \setlength{\labelwidth}{4\parindent}
     \setlength{\labelsep}{\parindent}
     \setlength{\leftmargin}{5\parindent}
     \setlength{\itemsep}{0pt}
     }%
   }%
  {\end{list}}

\newenvironment{parts}[0]{%
  \begin{list}{}%
    {\setlength{\itemindent}{0pt}
     \setlength{\labelwidth}{1.5\parindent}
     \setlength{\labelsep}{.5\parindent}
     \setlength{\leftmargin}{2\parindent}
     \setlength{\itemsep}{0pt}
     }%
   }%
  {\end{list}}
\newcommand{\Part}[1]{\item[\upshape#1]}

\renewcommand{\a}{\alpha}
\renewcommand{\b}{\beta}
\newcommand{\g}{\gamma}
\renewcommand{\d}{\delta}
\newcommand{\e}{\epsilon}
\newcommand{\f}{\varphi}
\newcommand{\bfphi}{{\boldsymbol{\f}}}
\renewcommand{\l}{\lambda}
\renewcommand{\k}{\kappa}
\newcommand{\lhat}{\hat\lambda}
\newcommand{\m}{\mu}
\newcommand{\bfmu}{{\boldsymbol{\mu}}}
\renewcommand{\o}{\omega}
\renewcommand{\r}{\rho}
\newcommand{\rbar}{{\bar\rho}}
\newcommand{\s}{\sigma}
\newcommand{\sbar}{{\bar\sigma}}
\renewcommand{\t}{\tau}
\newcommand{\z}{\zeta}

\newcommand{\D}{\Delta}
\newcommand{\G}{\Gamma}
\newcommand{\F}{\Phi}

\newcommand{\ga}{{\mathfrak{a}}}
\newcommand{\gA}{{\mathfrak{A}}}
\newcommand{\gb}{{\mathfrak{b}}}
\newcommand{\gB}{{\mathfrak{B}}}
\newcommand{\gC}{{\mathfrak{C}}}
\newcommand{\gD}{{\mathfrak{D}}}
\newcommand{\gm}{{\mathfrak{m}}}
\newcommand{\gn}{{\mathfrak{n}}}
\newcommand{\go}{{\mathfrak{o}}}
\newcommand{\gO}{{\mathfrak{O}}}
\newcommand{\gp}{{\mathfrak{p}}}
\newcommand{\gP}{{\mathfrak{P}}}
\newcommand{\gq}{{\mathfrak{q}}}
\newcommand{\gR}{{\mathfrak{R}}}

\newcommand{\Abar}{{\bar A}}
\newcommand{\Ebar}{{\bar E}}
\newcommand{\Kbar}{{\bar K}}
\newcommand{\Pbar}{{\bar P}}
\newcommand{\Sbar}{{\bar S}}
\newcommand{\Tbar}{{\bar T}}
\newcommand{\ybar}{{\bar y}}
\newcommand{\phibar}{{\bar\f}}

\newcommand{\Acal}{{\mathcal A}}
\newcommand{\Bcal}{{\mathcal B}}
\newcommand{\Ccal}{{\mathcal C}}
\newcommand{\Dcal}{{\mathcal D}}
\newcommand{\Ecal}{{\mathcal E}}
\newcommand{\Fcal}{{\mathcal F}}
\newcommand{\Gcal}{{\mathcal G}}
\newcommand{\Hcal}{{\mathcal H}}
\newcommand{\Ical}{{\mathcal I}}
\newcommand{\Jcal}{{\mathcal J}}
\newcommand{\Kcal}{{\mathcal K}}
\newcommand{\Lcal}{{\mathcal L}}
\newcommand{\Mcal}{{\mathcal M}}
\newcommand{\Ncal}{{\mathcal N}}
\newcommand{\Ocal}{{\mathcal O}}
\newcommand{\Pcal}{{\mathcal P}}
\newcommand{\Qcal}{{\mathcal Q}}
\newcommand{\Rcal}{{\mathcal R}}
\newcommand{\Scal}{{\mathcal S}}
\newcommand{\Tcal}{{\mathcal T}}
\newcommand{\Ucal}{{\mathcal U}}
\newcommand{\Vcal}{{\mathcal V}}
\newcommand{\Wcal}{{\mathcal W}}
\newcommand{\Xcal}{{\mathcal X}}
\newcommand{\Ycal}{{\mathcal Y}}
\newcommand{\Zcal}{{\mathcal Z}}

\renewcommand{\AA}{\mathbb{A}}
\newcommand{\BB}{\mathbb{B}}
\newcommand{\CC}{\mathbb{C}}
\newcommand{\FF}{\mathbb{F}}
\newcommand{\GG}{\mathbb{G}}
\newcommand{\NN}{\mathbb{N}}
\newcommand{\PP}{\mathbb{P}}
\newcommand{\QQ}{\mathbb{Q}}
\newcommand{\RR}{\mathbb{R}}
\newcommand{\ZZ}{\mathbb{Z}}

\newcommand{\bfa}{{\mathbf a}}
\newcommand{\bfb}{{\mathbf b}}
\newcommand{\bfc}{{\mathbf c}}
\newcommand{\bfe}{{\mathbf e}}
\newcommand{\bff}{{\mathbf f}}
\newcommand{\bfg}{{\mathbf g}}
\newcommand{\bfp}{{\mathbf p}}
\newcommand{\bfr}{{\mathbf r}}
\newcommand{\bfs}{{\mathbf s}}
\newcommand{\bft}{{\mathbf t}}
\newcommand{\bfu}{{\mathbf u}}
\newcommand{\bfv}{{\mathbf v}}
\newcommand{\bfw}{{\mathbf w}}
\newcommand{\bfx}{{\mathbf x}}
\newcommand{\bfy}{{\mathbf y}}
\newcommand{\bfz}{{\mathbf z}}
\newcommand{\bfA}{{\mathbf A}}
\newcommand{\bfF}{{\mathbf F}}
\newcommand{\bfB}{{\mathbf B}}
\newcommand{\bfD}{{\mathbf D}}
\newcommand{\bfG}{{\mathbf G}}
\newcommand{\bfI}{{\mathbf I}}
\newcommand{\bfM}{{\mathbf M}}
\newcommand{\bfzero}{{\boldsymbol{0}}}

\newcommand{\Adele}{\textsf{\upshape A}}
\newcommand{\Aut}{\operatorname{Aut}}
\newcommand{\Br}{\operatorname{Br}}  
\newcommand{\Disc}{\operatorname{Disc}}
\newcommand{\density}{{\boldsymbol\delta}}
\newcommand{\densitysup}{\overline{\density}}
\newcommand{\densityinf}{\underline{\density}}
\newcommand{\Div}{\operatorname{Div}}
\newcommand{\End}{\operatorname{End}}
\newcommand{\Fbar}{{\bar{F}}}
\newcommand{\FOD}{\textup{FOM}}
\newcommand{\FOM}{\textup{FOD}}
\newcommand{\Gal}{\operatorname{Gal}}
\newcommand{\GL}{\operatorname{GL}}
\newcommand{\Index}{\operatorname{Index}}
\newcommand{\Image}{\operatorname{Image}}
\newcommand{\liftable}{{\textup{liftable}}}
\newcommand{\hhat}{{\hat h}}
\newcommand{\Ksep}{K^{\textup{sep}}}
\newcommand{\Ker}{{\operatorname{ker}}}
\newcommand{\Lsep}{L^{\textup{sep}}}
\newcommand{\Lift}{\operatorname{Lift}}
\newcommand{\pp}{\operatorname{pp}}  
\newcommand{\vlim}{\operatornamewithlimits{\text{$v$}-lim}}
\newcommand{\wlim}{\operatornamewithlimits{\text{$w$}-lim}}
\newcommand{\MOD}[1]{~(\textup{mod}~#1)}
\newcommand{\Norm}{{\operatorname{\mathsf{N}}}}
\newcommand{\notdivide}{\nmid}
\newcommand{\normalsubgroup}{\triangleleft}
\newcommand{\odd}{{\operatorname{odd}}}
\newcommand{\onto}{\twoheadrightarrow}
\newcommand{\Orbit}{\mathcal{O}}
\newcommand{\ord}{\operatorname{ord}}
\newcommand{\Per}{\operatorname{Per}}
\newcommand{\PrePer}{\operatorname{PrePer}}
\newcommand{\PGL}{\operatorname{PGL}}
\newcommand{\Pic}{\operatorname{Pic}}
\newcommand{\Prob}{\operatorname{Prob}}
\newcommand{\Qbar}{{\bar{\QQ}}}
\newcommand{\rank}{\operatorname{rank}}
\newcommand{\Resultant}{\operatorname{Res}}
\renewcommand{\setminus}{\smallsetminus}
\newcommand{\SL}{\operatorname{SL}}
\newcommand{\Span}{\operatorname{Span}}
\newcommand{\tors}{{\textup{tors}}}
\newcommand{\Trace}{\operatorname{Trace}}
\newcommand{\twistedtimes}{\mathbin{%
   \mbox{$\vrule height 6pt depth0pt width.5pt\hspace{-2.2pt}\times$}}}
\newcommand{\UHP}{{\mathfrak{h}}}    
\newcommand{\Wreath}{\operatorname{Wreath}}
\newcommand{\<}{\langle}
\renewcommand{\>}{\rangle}

\newcommand{\longhookrightarrow}{\lhook\joinrel\longrightarrow}
\newcommand{\longonto}{\relbar\joinrel\twoheadrightarrow}

\newcommand{\Spec}{\operatorname{Spec}}
\renewcommand{\div}{{\operatorname{div}}}

\newcounter{CaseCount}
\Alph{CaseCount}
\def\Case#1{\par\vspace{1\jot}\noindent
\stepcounter{CaseCount}
\framebox{Case \Alph{CaseCount}.\enspace#1}
\par\vspace{1\jot}\noindent\ignorespaces}

\title[\TITLERUNNING]{\TITLE}
\date{\DATE}

\author[Joseph H. Silverman]{Joseph H. Silverman}
\email{jhs@math.brown.edu}
\address{Mathematics Department, Box 1917
         Brown University, Providence, RI 02912 USA}
\subjclass{Primary: 11G35; Secondary:  11B37, 14G40, 37F10}
\keywords{arithmetic dynamical systems, orbit modulo $p$}
\thanks{The author's research supported by NSF grant DMS-0650017}

\begin{abstract}
Let $\f:V\to V$ be a self-morphism of a quasiprojective variety defined
over a number field~$K$ and let~$P\in V(K)$ be a point with infinite
orbit under iteration of~$\f$. For each prime~$\gp$ of good reduction,
let~$m_\gp(\f,P)$ be the size of the $\f$-orbit of the reduction of~$P$
modulo~$\gp$. Fix any~$\e>0$. We show that for almost all
primes~$\gp$ in the sense of analytic density, the orbit
size~$m_\gp(\f,P)$ is larger than~$(\log\Norm_{K/\QQ}\gp)^{1-\e}$.
\end{abstract}


\maketitle

\section*{Introduction}
Let
\[
  \f:\PP^N_\QQ\longrightarrow\PP^N_\QQ
\]
be a morphism of degree~$d$ defined over~$\QQ$ and let~$P\in\PP^N(\QQ)$
be a point with infinite forward orbit
\[
  \Ocal_\f(P) = \bigl\{P,\f(P),\f^2(P),\dots\bigr\}.
\]
For all but finitely many primes~$p$, we can reduce~$\f$ to obtain a
morphism
\[
  \tilde\f_p : \PP^N_{\FF_p}\longrightarrow\PP^N_{\FF_p}
\]
whose degree is still~$d$. We write~$m_p(\f,P)$ for the
size of the orbit of the reduced point~$\tilde P = P\bmod p$,
\[
  m_p(\f,P) = \#\Ocal_{\tilde\f_p}(\tilde P).
\]
(For the remaining primes we define~$m_p(\f,P)$ to be~$\infty$.)
\par
Using an elementary height argument (see
Corollary~\ref{cor:mpgecloglogp}), one can show that
\[
  m_p(\f,P) \ge d\log\log p + O(1)
  \quad\text{for all~$p$,}
\]
but this is a very weak lower bound for the size of the mod~$p$
orbits. Our principal results say that for most primes~$p$, we can do
(almost) exponentially better. In the following result, we
write~$\density(\Pcal)$ for the logarithmic analytic density of a set
of primes~$\Pcal$. (See Section~\ref{section:setup} for the precise
definition of~$\density$ and the associated lower
density~$\densityinf$.)

\begin{theorem}
\label{thm:densityQ}
With notation as above, we have the following\textup{:}
\begin{parts}
\Part{(a)}
For all~$\g<1$,
\[
  \density\bigl\{p : m_p(\f,P)\ge(\log p)^\g\bigr\} = 1.
\]
\Part{(b)}
There is a constant~$C=C(N,\f,P)$ so that for all~$\e>0$,
\[
  \densityinf\bigl\{p : m_p(\f,P)\ge \e\log p\bigr\} \ge 1-C\e.
\]
\end{parts}
\end{theorem}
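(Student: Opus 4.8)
The plan is to exploit the elementary orbit dichotomy: the reduction $\tilde P \bmod p$ is a periodic point of $\tilde\f_p$ of period exactly $m_p(\f,P)$ if and only if $\tilde P$ itself lies in $\Per(\tilde\f_p)$, and in general $m_p(\f,P)$ decomposes as a tail length plus an eventual period. The key observation, which I would establish first, is that a \emph{small} value of $m_p(\f,P)$ forces $p$ to divide a specific nonzero integer. Concretely, if $\f^i(P)$ and $\f^j(P)$ collide modulo $p$ for some $0\le i<j\le n$, then $p$ divides (the numerator of) a coordinate-wise resultant-type quantity $\Delta_{i,j}$ built from the homogeneous coordinates of $\f^i(P)$ and $\f^j(P)$; since $P$ has infinite orbit, $\f^i(P)\ne\f^j(P)$ in $\PP^N(\QQ)$, so $\Delta_{i,j}\ne 0$. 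Hence
\[
  m_p(\f,P)\le n \implies p \mid D_n, \qquad D_n := \prod_{0\le i<j\le n}\Delta_{i,j}\in\ZZ\setminus\{0\}.
\]
So the primes with small orbit are controlled by the prime divisors of the integers $D_n$, and everything reduces to bounding $\log|D_n|$.

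The second step is a height estimate: using standard properties of the canonical height $\hhat_\f$ and the fact that $h(\f^k(P)) = d^k\hhat_\f(P) + O(1)$ grows geometrically, I would bound the archimedean and non-archimedean contributions to $\log|\Delta_{i,j}|$ by $O(d^j)$, hence
\[
  \log|D_n| \le C_1 d^{n+1}
\]
for a constant $C_1=C_1(N,\f,P)$. Combined with the previous step, if $m_p(\f,P)\le n$ then $p\mid D_n$, so the number of such primes below $x$ is at most $\omega(D_n)\le \log|D_n|/\log 2 \le C_2 d^{n+1}$; more usefully, taking $n$ of size roughly $\log_d\log p$ shows that any prime with $m_p(\f,P)\le (\log p)^\g$ contributes, via $p\mid D_n$ with $\log|D_n|\ll d^{(\log p)^\g+1}$, a constraint we exploit by summing $1/p$ over such primes. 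The point is that $\sum_{p \mid D_n, p\le x} (\log p)/p$ is controlled by $\log|D_n|$, so the set $\{p: m_p(\f,P)\le n\}$ has its $\sum 1/p$-weighted size bounded in terms of $d^n$, which is tiny compared to $\log x$ when $n \approx \gamma\log_d\log x$.

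For part~(a): fix $\g<1$, pick $\g<\g'<1$, and set $n(x)=\lfloor(\log x)^{\g'}\rfloor$. The primes $p\le x$ with $m_p(\f,P)<(\log p)^\g$ satisfy $m_p(\f,P)\le n(x)$ (for $p$ large), hence $p\mid D_{n(x)}$; the logarithmic density of such primes is $\le \bigl(\sum_{p\mid D_{n(x)}} (\log p)/p\bigr)/\log x \ll d^{(\log x)^{\g'}}/\log x \to 0$ — wait, this is the wrong direction, so instead I would run the argument for each fixed $n$: $\{p: m_p(\f,P)\le n\}$ is \emph{finite} (it is contained in the prime divisors of the single integer $D_n$), hence has density $0$; then $\{p: m_p(\f,P)<(\log p)^\g\}\subseteq \bigcup_{n}\{p\le e_n : m_p\le n\}$ where $e_n$ is chosen so $n<(\log e_n)^\g$, and a careful diagonal/summation argument over the dyadic ranges, using $\log|D_n|\le C_1 d^{n+1}$ against $\log p > n^{1/\g}$, gives density $0$ for the complement; since densities of disjoint pieces add, the density of $\{p: m_p(\f,P)\ge(\log p)^\g\}$ is $1$. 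For part~(b): fix $\e>0$ and consider $\{p: m_p(\f,P)<\e\log p\}$; such a prime divides $D_n$ for $n=\lfloor\e\log p\rfloor$, and the $\sum(\log p)/p$ over all these primes up to $x$ is at most $\sum_{n\le\e\log x}\omega(D_n)\cdot\frac{\log x}{e^{n/\e}}\cdot$(something), which after summing is $O(\e\log x)$ because $\log|D_n|\le C_1d^{n+1}$ and $n/\e = \log p$ forces $p\ge e^{n/\e}$, making the geometric series in $n$ converge to a quantity bounded by $C\e\log x$; dividing by $\log x$ gives $\densityinf\{p: m_p(\f,P)\ge\e\log p\}\ge 1-C\e$.

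The main obstacle is the bookkeeping in the summation arguments: one must carefully track, for each $n$, the range of primes $p$ for which $m_p(\f,P)\le n$ is the relevant bound (this is $p$ with $\log p$ comparable to $n$ or $n^{1/\g}$), and ensure that when one sums the contributions $\log|D_n|/\log p \ll d^{n+1}/p^{?}$ over all $n$ and all primes in the relevant range, the geometric growth $d^{n+1}$ is dominated by the exponential lower bound $p \ge e^{\Omega(n)}$ coming from the constraint $n\le\e\log p$ (or $n<(\log p)^\g$). For (b) this works because $\e\log p \ge n \Rightarrow p\ge e^{n/\e}$ and $d^{n+1}/e^{n/\e}$ is summable for $\e$ small; for (a) one needs $(\log p)^\g\ge n\Rightarrow \log p\ge n^{1/\g}$ against $d^{n+1}$, and since $n^{1/\g}$ grows faster than $n$, the factor $e^{-n^{1/\g}}$ crushes $d^{n+1}$ — this is exactly where $\g<1$ is (and must be) used.
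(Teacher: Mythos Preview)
Your core strategy is correct and shares its arithmetic backbone with the paper: you build the nonzero integers~$D_n$ with the property that $m_p(\f,P)\le n\Rightarrow p\mid D_n$, and you bound $\log|D_n|\ll d^{n}$ via height growth. This is exactly the content of the paper's Proposition on the ideal~$\gD(m)$.

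Where you diverge is in extracting the density statements. The paper packages everything into an analytic estimate
\[
  \sum_{p}\frac{\log p}{p\,e^{sm_p^{\l}}}\le\frac{C}{s^{1/\l}},
\]
proved by Abel summation together with the Mertens-type lemma $\sum_{p\mid D}\frac{\log p}{p}\ll\log\log|D|$; the density results for~(a) and~(b) then follow by specializing~$\l=1/\g$ and~$\l=1$ respectively. Your route is more direct: you group primes by the value $n=m_p$, observe that $m_p<(\log p)^{\g}$ forces $p>e^{n^{1/\g}}$ (and $m_p<\e\log p$ forces $p>e^{n/\e}$), and then use only the crude inequality
\[
  \sum_{\substack{p\mid D_n\\p>T}}\frac{\log p}{p}\le\frac{1}{T}\sum_{p\mid D_n}\log p\le\frac{\log|D_n|}{T}.
\]
Summing over~$n$ gives $\sum_n d^{n}e^{-n^{1/\g}}<\infty$ for~(a) and $\sum_n d^{n}e^{-n/\e}<\infty$ for $\e<1/\log d$ in~(b). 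This avoids both the Abel summation and the $\log\log$ lemma entirely, and in fact yields the slightly stronger conclusion that the exceptional set in~(b) has density~$0$ (not merely~$\le C\e$) once~$\e$ is small enough; the paper's analytic estimate, on the other hand, is a result of independent interest that your method does not recover.

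Two expository points to clean up. First, your initial attempt for~(a), bounding $\sum_{p\mid D_{n(x)}}\frac{\log p}{p}$ by~$\log|D_{n(x)}|$, is indeed far too crude (as you note); the salvageable argument is precisely the one in your final paragraph, and you should present it directly rather than after a false start. Second, your intermediate claim for~(b) that the sum is~``$O(\e\log x)$'' via a bound involving~$\omega(D_n)\cdot\frac{\log x}{e^{n/\e}}$ is garbled; the correct bound your own method produces is~$O(1)$ uniformly in~$x$ for small~$\e$, and the factor~$\omega(D_n)$ should not appear---the relevant quantity is $\sum_{p\mid D_n}\log p\le\log|D_n|$, not the number of prime divisors.
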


More generally, we prove analogous results for any
self-mor\-phism $\f:V\to V$ of a quasiprojective variety~$V$ defined
over a number field~$K$. See Section~\ref{section:setup} for the basic
setup and Theorem~\ref{thm:nfdensity} for the precise statement of
our main result.
\par
The proof of Theorem~\ref{thm:densityQ}, and its generalization
Theorem~\ref{thm:nfdensity}, proceeds in two steps. In the first step
we prove that there is an integer~$D(m)$ satisfying~$\log\log D(m)\ll{m}$ 
with the property that
\[
  \text{$m_p(\f,P)\le m$ \quad if and only if\quad $p|D(m)$.}
\]
This is done using a height estimate for rational maps
(Proposition~\ref{prop:hfnQlednhQC}) and a height estimate for
arithmetic distances (Proposition~\ref{prop:gDgAgBhAhB}).  The second
part of the proof uses the first part to prove an analytic estimate
(Theorem~\ref{thm:analyticresult}) of the following form: for
all~$\l\ge1$ there is a constant~$C=C(\f,\l)$ so that
\begin{equation}
  \label{eqn:sumQ}
  \sum_{p~\text{prime}} \frac{\log p}{pe^{sm_p(\f,P)^\l}}
  \le \frac{C}{s^{1/\l}}
  \qquad\text{for all $s>0$.}
\end{equation}
The inequality~\eqref{eqn:sumQ} is a dynamical analogue of the results
in~\cite{MR1395936}, which treated the case of periods modulo~$p$ of
points in algebraic groups.
\par
Theorem~\ref{thm:densityQ} says that for most~$p$, the mod~$p$ orbit
of~$P$ has size (almost) as large as~$\log p$. If~$\f$ were a random
map, we would expect most orbits to have size on the order
of~$\sqrt{\#\PP^N(\FF_p)}\approx p^{N/2}$. In
Section~\ref{section:conjandexp} we do some experiments using quadratic
polynomials~$\f_c(z)=z^2+c$ which seem to suggest that~$m_p(\f_c,\a)$
grows slightly slower than~$\sqrt{p}$.

\section{Notation and Statement of Main Result}
\label{section:setup}

In this section we set notation for our basic objects of study, give
some basic definitions, and state our main result. We start with the
dynamical setup.

\begin{definition}
Let~$K/\QQ$ be a number field, let~$V\subset\PP^N_K$ be a quasiprojective
variety defined over~$K$, and let
\[
  \f:V\longrightarrow V
\]
be a morphism defined over~$K$. Fix a point~$P\in V(K)$ whose
forward~$\f$-orbit
\[
  \Orbit_\f(P) = \bigl\{P,\f(P),\f^2(P),\dots\bigr\}
\]
is infinite. For each prime ideal~$\gp$ of~$K$ at which~$V$ and~$\f$
have good reduction, let
\[
  m_\gp = m_\gp(\f,P)
   = \text{size of the $\tilde\f$-orbit of~$\tilde P$ in $\tilde V(\FF_\gp)$}.
\]
If~$V$ has bad reduction at~$\gp$, we set~$m_\gp=\infty$. 
\end{definition}

\begin{remark}
The question of good versus bad reduction requires choosing models
for~$V$ and~$\f$ over the ring of integers of~$K$. However, different
choices affect only finitely many of the~$m_\gp(\f,,P)$ values, and
thus have no effect on our density results. We thus assume throughout
that a particular model has been fixed.
\end{remark}

We next define the analytic density that will be used in the statement
of our main result.

\begin{definition}
Let~$K/\QQ$ be a number field with
ring of integers~$R_K$.
For any set of primes~$\Pcal\subset\Spec(R_K)$, define
the partial~$\zeta$-function for~$\Pcal$ by
\[
  \z_K(\Pcal,s) 
   = \prod_{\gp\in\Pcal}\left(1-\frac{1}{\Norm_{K/\QQ}\gp^s}\right)^{-1}.
\]
This Euler product defines an analytic function 
on~$\operatorname{Re}(s)>1$. As usual,
we write~$\z_K(s)$ for the $\z$-function of the field~$K$.
Then the  (\emph{logarithmic analytic}) \emph{density} of~$\Pcal$ 
is given by the following limit, assuming that the limit exists:
\[
  \density(\Pcal) = \lim_{s\to1^+} \frac{d\log\z_K(\Pcal,s)}{d\log\z_K(s)}
  = \lim_{s\to1^+} \frac{\z'_K(\Pcal,s)/\z_K(\Pcal,s)}{\z'_K(s)/\z_K(s)}.
\]
Expanding the logarithm before differentiating and using the fact
that $\z_K(s)$ has a simple pole at~$s=1$, it is easy to check that
the density is also given by the formula
\[
  \density(\Pcal) 
  = \lim_{s\to 1^+} (s-1)\sum_{\gp\in\Pcal} 
       \frac{\log\Norm_{K/\QQ}\gp}{\Norm_{K/\QQ}\gp^s}.
\]
We similarly define upper and lower densities~$\densitysup(\Pcal)$ 
and~$\densityinf(\Pcal)$ by
replacing the limit with the limsup or the liminf, respectively.
\end{definition}

With this notation, we can now state our main result.

\begin{theorem}
\label{thm:nfdensity}
Let~$K/\QQ$ be a number field
and let~$\f:V/K\to V/K$, and~$P\in V(K)$ be as described in
this section. Further, let~$m_\gp(\f,P)$ denote the size
of the~$\tilde\f$-orbit of~$\tilde P$ in~$\tilde V(\FF_\gp)$.
\begin{parts}
\Part{(a)}
For all~$\g<1$ we have
\[
  \density\bigl\{\gp\in\Spec R_K:m_p(\f,P)\ge(\log \Norm\gp)^\g\bigr\} = 1.
\]
\Part{(b)}
There is a constant~$C=C(K,V,\f,P)$ so that for all~$\e>0$,
\[
  \densityinf\bigl\{\gp\in\Spec R_K:m_p(\f,P)\ge \e\log\Norm\gp\bigr\} 
        \ge 1 - C \e.
\]
\end{parts}
\end{theorem}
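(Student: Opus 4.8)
The plan is to deduce Theorem~\ref{thm:nfdensity} from the single analytic inequality of Theorem~\ref{thm:analyticresult} (the number-field form of~\eqref{eqn:sumQ}): for every $\l\ge1$ there is a constant $C$ with $\sum_{\gp}\frac{\log\Norm\gp}{\Norm\gp\,e^{s\,m_\gp(\f,P)^{\l}}}\le C\,s^{-1/\l}$ for all $s>0$. Both parts then follow by a now-standard density computation. For part~(a), fix $\g<1$ and set $\l=1/\g>1$; on $\Pcal=\{\gp:m_\gp<(\log\Norm\gp)^{\g}\}$ one has $m_\gp^{\l}<\log\Norm\gp$, hence $\Norm\gp^{-(s-1)}<e^{-(s-1)m_\gp^{\l}}$, and inserting this into $\density(\Pcal)=\lim_{s\to1^{+}}(s-1)\sum_{\gp\in\Pcal}\log\Norm\gp/\Norm\gp^{\,s}$ together with the analytic inequality at parameter $s-1$ gives $\densitysup(\Pcal)\le\limsup_{s\to1^{+}}C(s-1)^{1-1/\l}=0$; so $\density(\Pcal)=0$ and $\density(\Pcal^{c})=1$, which is~(a). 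For part~(b), take $\l=1$ and $\Pcal_\e=\{\gp:m_\gp<\e\log\Norm\gp\}$ and apply the analytic inequality at parameter $(s-1)/\e$; the same computation yields $\densitysup(\Pcal_\e)\le C\e$, and since complementary sets of primes have upper and lower densities summing to~$1$, $\densityinf(\Pcal_\e^{\,c})\ge1-C\e$. (The exponent $1/\l$ is precisely what forces $\g<1$ in~(a) and the linear loss in~(b).)

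It remains to prove the analytic inequality, and this is where the arithmetic dynamics enters, via the two-step structure described in the introduction. \emph{Step one} is a combinatorial reduction. Since $\Orbit_\f(P)$ is infinite, the iterates $P,\f(P),\dots,\f^{m}(P)$ are pairwise distinct in $V(K)$, and for a prime $\gp$ of good reduction, $m_\gp(\f,P)\le m$ holds exactly when two of them coincide modulo~$\gp$; such a coincidence of two distinct $K$-points is detected by $\gp$ dividing the arithmetic-distance ideal $\D\!\bigl(\f^{i}(P),\f^{j}(P)\bigr)$. Hence, with $D(m)=\prod_{0\le i<j\le m}\D\!\bigl(\f^{i}(P),\f^{j}(P)\bigr)$, we obtain the equivalence $m_\gp(\f,P)\le m\iff\gp\mid D(m)$ (bad primes, having $m_\gp=\infty$, are irrelevant). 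The size of $D(m)$ is then controlled by the height machine: Proposition~\ref{prop:hfnQlednhQC} shows $h(\f^{n}(P))$ grows at most geometrically, $h(\f^{n}(P))=O(d^{\,n})$ along the orbit, and Proposition~\ref{prop:gDgAgBhAhB} bounds the height of an arithmetic distance by the sum of the heights of the two points; summing over the $O(m^{2})$ pairs gives $\log\Norm D(m)=O(m^{2}d^{\,m})$ and hence the crucial estimate $\log\log\Norm D(m)\ll m$.

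\emph{Step two} is the analytic estimate proper. Put $\Psi(u)=\sum_{m_\gp\le u}\log\Norm\gp/\Norm\gp$; by step one this is a finite sum over the prime divisors of $D(\lfloor u\rfloor)$, it vanishes for $u<1$, and a Mertens-type bound --- $\sum_{\gp\mid\gm}\log\Norm\gp/\Norm\gp=O(\log\log\Norm\gm)$, the extreme case being when the prime divisors of~$\gm$ are as small as possible --- combines with $\log\log\Norm D(m)\ll m$ to give $\Psi(u)\ll u$. Writing $e^{-s\,m_\gp^{\l}}=\int_{m_\gp}^{\infty}s\l\,u^{\l-1}e^{-su^{\l}}\,du$ and exchanging the (everywhere nonnegative) sum and integral, one finds $\sum_{\gp}\frac{\log\Norm\gp}{\Norm\gp\,e^{s\,m_\gp^{\l}}}=\int_{1}^{\infty}s\l\,u^{\l-1}e^{-su^{\l}}\,\Psi(u)\,du$, which by $\Psi(u)\ll u$ is $\ll\int_{1}^{\infty}s\l\,u^{\l}e^{-su^{\l}}\,du$; the substitution $v=su^{\l}$ identifies this last integral with $s^{-1/\l}\int_{s}^{\infty}v^{1/\l}e^{-v}\,dv\le\Gamma(1+1/\l)\,s^{-1/\l}$, the required bound.

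The step I expect to be the main obstacle is step one --- partly the bookkeeping needed to make the equivalence $m_\gp\le m\iff\gp\mid D(m)$ correct (treating $\f$ as a rational self-map of $\PP^{N}$ and checking that $P$ and all its iterates avoid the indeterminacy and bad-reduction loci, and that the arithmetic-distance ideal genuinely detects congruence), but above all the size bound $\log\log\Norm D(m)\ll m$. That bound is the whole point of the construction: it says $D(m)$ is only doubly exponential in~$m$, which is exactly what gives the analytic inequality content, and it holds only because the height inputs are of the right strength (geometric growth of $h(\f^{n}(P))$, linear dependence of the height of an arithmetic distance on the two point heights), so that the product over $O(m^{2})$ pairs stays doubly exponential rather than worse. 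With that in hand, step two follows the template of~\cite{MR1395936}; the one delicate point there is keeping the exponent sharp so the bound degrades like $s^{-1/\l}$, since that exponent is what produces the threshold $\g<1$ in~(a) and the constant $C\e$ in~(b).
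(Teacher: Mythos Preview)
Your proposal is correct and follows essentially the same route as the paper: deduce both density statements from the analytic inequality of Theorem~\ref{thm:analyticresult} by restricting to the bad set and reading off the upper density, and prove that inequality by combining the ideal characterization $m_\gp\le m\iff\gp\mid D(m)$ with the height bound $\log\log\Norm D(m)\ll m$ (the paper's Proposition~\ref{prop:gDm}) and a partial-summation argument. The only cosmetic differences are that the paper builds $\gD(m)$ from the $m$ pairs $\bigl(\f^m(P),\f^s(P)\bigr)$ with $0\le s<m$ rather than all $\binom{m+1}{2}$ pairs, and uses discrete Abel summation plus Lemma~\ref{lemma:msum} in place of your integral representation; neither change affects the estimates.
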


\section{Height and norm estimates}
\label{section:htandnorm}

In this section we prove various estimates for heights and norms that
will be needed for the proof of our main result.  To ease notation,
for the remainder of this paper we fix the number field~$K/\QQ$ and
write~$\Norm\ga$ for the~$K/\QQ$ norm of a fractional ideal~$\ga$
of~$K$.

\begin{proposition}
\label{prop:hfnQlednhQC}
With notation as in Section~$\ref{section:setup}$ and
Theorem~$\ref{thm:nfdensity}$, there are constants
\[
  d=d(V,\f)\ge2\quad\text{and}\quad C=C(V,\f)\ge0
\]
so that
\[
  h\bigl(\f^n(Q)\bigr) \le d^n\bigl(h(Q) + C)
  \quad\text{for all $n\ge0$ and all $Q\in V(\Kbar)$.}
\]
\end{proposition}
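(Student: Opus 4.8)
The plan is to prove the $n=1$ case first and then iterate. For $n=1$, I would use the standard functoriality of heights under morphisms of projective space. Although $\f$ is a morphism of the quasiprojective variety $V\subset\PP^N_K$, the key point is that $\f$ is given (at least on a suitable locally closed subset, and after choosing a model) by a tuple of homogeneous polynomials of some degree $d\ge 2$ on the ambient $\PP^N$, or more precisely that $\f$ extends to a rational map $\PP^N\dashrightarrow\PP^N$ defined by homogeneous forms of degree $d$. For any rational map $\psi=[\,f_0:\cdots:f_M\,]:\PP^N\dashrightarrow\PP^M$ given by homogeneous forms of degree $d$, one has the elementary upper bound
\[
  h\bigl(\psi(Q)\bigr) \le d\,h(Q) + C_0
  \qquad\text{for all $Q$ not in the indeterminacy locus,}
\]
where $C_0$ depends only on the heights of the coefficients of the $f_i$ (this is the "easy half" of the functoriality of heights, and unlike the lower bound it requires no hypothesis on the base locus). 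Since $\f(Q)\in V$ for $Q\in V$, applying this with $\psi$ the chosen representative of $\f$ gives $h(\f(Q))\le d\,h(Q)+C_0$ for all $Q\in V(\Kbar)$.

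Next I would iterate. Writing the $n=1$ bound as $h(\f(Q))+C \le d\,h(Q) + (C_0 + C)$ and choosing $C\ge0$ large enough that $C_0 + C \le d\,C$ — for instance $C = C_0/(d-1)$, which is legitimate since $d\ge2$ — we get
\[
  h\bigl(\f(Q)\bigr) + C \le d\bigl(h(Q) + C\bigr).
\]
Applying this inequality $n$ times (formally, induction on $n$, with the $n=0$ case being the trivial $h(Q)\le h(Q)$) yields
\[
  h\bigl(\f^n(Q)\bigr) + C \le d^n\bigl(h(Q) + C\bigr),
\]
and dropping the nonnegative $C$ on the left gives $h(\f^n(Q))\le d^n(h(Q)+C)$, as desired. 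Here I am implicitly using that $\f^n(Q)\in V$ for all $n$, so that each application of the $n=1$ bound is valid along the orbit; this is exactly why the hypothesis is that $\f$ maps $V$ into $V$.

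The only genuine subtlety — and the step I would be most careful about — is the passage from "$\f$ is a morphism of the quasiprojective $V$" to "$\f$ is computed by a single fixed tuple of homogeneous forms on $\PP^N$ with controlled coefficients," since a morphism of $V$ need not extend to a morphism of $\PP^N$, only to a rational map, and in principle one might need different representative tuples on different charts. However, for the height \emph{upper} bound this causes no trouble: one can cover $V$ by finitely many of the open sets where a given representative tuple is defined (the complement of its base locus meets $V$), take the maximum of the finitely many resulting constants, and note that the degree $d$ can be taken uniform (e.g. the maximum of the degrees, clearing denominators if necessary). Alternatively, one may simply fix at the outset a single expression of $\f$ as a rational map $\PP^N\dashrightarrow\PP^N$ of degree $d$ whose indeterminacy locus is disjoint from $V$, which exists because $V$ is quasiprojective and $\f$ is a morphism on it; this is the cleanest route and makes the iteration above immediate. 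Everything else is the routine height bookkeeping sketched above.
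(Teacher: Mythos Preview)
Your argument is essentially the paper's: cover $V$ by finitely many opens on which $\f$ is given by a single tuple of homogeneous forms, apply the elementary upper height bound $h(\f_i(Q))\le d_ih(Q)+C_i$ on each, take $d$ and $C$ to be the maxima, and iterate (the paper sums the geometric series, you absorb it via $C=C_0/(d-1)$, which is equivalent). One caveat: your ``alternative'' claim that there always exists a \emph{single} rational map $\PP^N\dashrightarrow\PP^N$ whose indeterminacy locus is disjoint from $V$ is false in general---for instance, if $V=E\subset\PP^2$ is a plane cubic and $\f=\tau_P$ is translation by a non-$3$-torsion point, then any tuple $[F_0:F_1:F_2]$ of degree-$d$ forms with no common zero on $E$ would force $\mathcal{O}_E(d)\cong\tau_P^*\mathcal{O}_E(1)$, hence $d=1$ and $3P=0$---so the multi-chart route is genuinely needed, and is exactly what the paper does.
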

\begin{proof}
We are given that~$\f$ is a morphism on~$V$, but note that~$V$ is only
quasiprojective, i.e.,~$V$ is a Zariski open subset of a Zariski
closed subset of~$\PP^N$. We write~$V$ as a union of open
subsets~$V_1,\ldots,V_t$ such that on each~$V_i$ we can write
\[
  \f_i = \f|_{V_i} = [F_{i0},F_{i1},\ldots,F_{iN}],
\]
where the~$F_{ij}$ are homogeneous polynomials and such that
\[
  F_{i0},F_{i1},\dots,F_{iN}
  \quad\text{do not simultaneously vanish on~$V_i$.}
\]
We may view~$\f_i$ as a rational map $\f_i:\PP^N\dashrightarrow\PP^N$
of degree~$d_i=\deg F_{ij}$. Letting~$Z_i\subset\PP^N$ be the locus of
indeterminacy for the rational map~$\f_i$, we have the elementary
height estimate
\begin{equation}
  \label{eqn:hfiQledihQC}
  h\bigl(\f_i(Q)\bigr) 
  \le d_i h(Q) + C(\f_i),
  \quad\text{valid for all $Q\in\PP^N(\Kbar)\setminus Z_i$.}
\end{equation}
(See~\cite[Theorem~B.2.5(a)]{MR1745599}.) By construction,
\[
  V \cap Z_1\cap Z_2\cap\cdots\cap Z_t = \emptyset,
\]
so for all~$Q\in V(\Kbar)$ we obtain the inequality
\begin{align*}
  h\bigl(\f(Q)\bigr)
  &= h\bigl(\f_i(Q)\bigr)
    &&\text{for any $i$ with $Q\notin Z_i$,} \\
  &\le \max_{\text{$i$ with $Q\notin Z_i$}} d_i h(Q) + C(\f_i) 
    &&\text{from \eqref{eqn:hfiQledihQC},} \\
  &\le \max_{1\le i\le n} d_i h(Q) + \max_{1\le i\le n} C(\f_i).
\end{align*}
Setting~
\[
  d=\max\{2,d_1,\dots,d_t\}
  \quad\text{and}\quad
  C = \max\bigl\{C(\f_1),\ldots,C(\f_t)\bigr\},
\]
we have
\[
  h\bigl(\f(Q)\bigr) \le d h(Q) + C
  \quad\text{for all $Q\in V(\Kbar)$.}
\]
Applying this iteratively yields
\begin{equation}
  \label{eqn:hfnQdnhQ}
  h\bigl(\f^n(Q)\bigr) \le d^n h(Q) + (1+d+\dots+d^{n-1}) C
  \le d^n\bigl(h(Q)+C),
\end{equation}
(note that $d\ge2$ by assumption) which is the desired result.
\end{proof}

\begin{remark}
If $\f:\PP^N\to\PP^N$ is a finite morphism, then in the statement of
Proposition~\ref{prop:hfnQlednhQC} we can
take~$d=\max\{\deg\f,2\}$. More precisely, in this situation a
standard property of height functions~\cite[B.2.5(b)]{MR1745599} gives
upper and lower bounds,
\[
  h\bigl(\f^n(Q)\bigr) = d^n\bigl(h(Q)+O(1)\bigr).
\]
\par
For maps of degree~$1$, the middle inequality in~\eqref{eqn:hfnQdnhQ}
yields the stronger estimate
\[
  h\bigl(\f^n(Q)\bigr) \le h(Q) + Cn.
\]
Tracing through the proofs in this paper, this would give an
exponential improvement in our results. For example, consider a linear
map~$\f(z)=az$ with~$a\in\QQ^*$, so~$m_p(\f,1)$ is the order of~$a$ in
the multiplicative group~$\FF_p^*$. Then in place of~\eqref{eqn:sumQ}
we would obtain
\begin{equation}
  \label{eqn:mrs1}
  \sum_{p~\text{prime}} \frac{\log p}{pm_p(\f,1)^s} \le \frac{2}{s}+O(1),
\end{equation}
and this would allow us to replace Theorem~\ref{thm:densityQ}(b) with
\begin{equation}
  \label{eqn:mrs2}
  \densityinf\bigl\{p : m_p(\f,1)\ge p^\e\bigr\} \ge 1 - 2\e.
\end{equation}
However, we will not pursue the degree~$1$ case, because
both~\eqref{eqn:mrs1} and~\eqref{eqn:mrs2} are special cases of the
general results on algebraic groups proven in~\cite{MR1395936}, see in
particular~\cite[equation~(3)]{MR1395936} and the remark
following~\cite[Theorem~4.2]{MR1395936}.
\end{remark}

\begin{proposition}
\label{prop:gDgAgBhAhB}
Let~$K/\QQ$ be a number field and
let
\[
  \a_0,\ldots,\a_N,\b_0,\ldots,\b_N\in K
\]
be elements of~$K$
with at least one~$\a_i$ and at least one~$\b_i$ nonzero.
Define fractional ideals
\[
  \gA=(\a_0,\dots,\a_N),\quad
  \gB=(\b_0,\dots,\b_N),\quad
  \gD=(\a_i\b_j-\a_j\b_i)_{0\le i<j\le N}.
\]
Also let~$A=[\a_0,\dots,\a_N]\in\PP^N(K)$
and~$B=[\b_0,\dots,\b_N]\in\PP^N(K)$, and assume that~$A\ne B$. Then
\[
  \frac{1}{[K:\QQ]}
  \log\left(\frac{\Norm\gD}{\Norm\gA\cdot\Norm\gB} \right)
  \le h(A) + h(B) +O(1),
\]
where the~$O(1)$ depends only on~$N$. \textup(Here~$h$ is the 
absolute logarithm height on~$\PP^N(\Qbar)$.\textup)
\end{proposition}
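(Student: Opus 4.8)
The plan is to work place by place and exploit the standard relation between heights and valuations. Recall that for $A=[\a_0,\dots,\a_N]\in\PP^N(K)$ we have
\[
  h(A)=\frac{1}{[K:\QQ]}\sum_{v\in M_K} n_v \log\max_i |\a_i|_v,
\]
and that the fractional ideal norm can be recovered from the nonarchimedean absolute values via
\[
  \frac{1}{[K:\QQ]}\log\Norm\gA = -\frac{1}{[K:\QQ]}\sum_{v\nmid\infty} n_v\log\max_i|\a_i|_v.
\]
So the left-hand side of the asserted inequality equals
\[
  \frac{1}{[K:\QQ]}\sum_{v\nmid\infty} n_v\Bigl(\log\max_i|\a_i|_v + \log\max_j|\b_j|_v - \log\max_{i<j}|\a_i\b_j-\a_j\b_i|_v\Bigr).
\]
First I would bound the $v$-term in the sum. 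For a nonarchimedean place, the ultrametric inequality gives $|\a_i\b_j-\a_j\b_i|_v\le \max_i|\a_i|_v\cdot\max_j|\b_j|_v$, so each summand is $\ge 0$; this is the one place where working $2\times 2$ minors is convenient. Hence the nonarchimedean sum is bounded above by the sum over \emph{all} places $v\in M_K$ of $n_v$ times
\[
  \log\max_i|\a_i|_v + \log\max_j|\b_j|_v - \log\max_{i<j}|\a_i\b_j-\a_j\b_i|_v,
\]
provided I can control the archimedean contributions, which I address next.

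For each archimedean $v$ I want a lower bound $\max_{i<j}|\a_i\b_j-\a_j\b_i|_v \ge c_N^{-1}\max_i|\a_i|_v\cdot\max_j|\b_j|_v$ for some constant $c_N$ depending only on $N$. After scaling I may assume $\max_i|\a_i|_v=\max_j|\b_j|_v=1$, say attained at indices $i_0$ and $j_0$. If $i_0\ne j_0$ then $|\a_{i_0}\b_{j_0}-\a_{j_0}\b_{i_0}|_v \ge 1 - |\a_{j_0}|_v|\b_{i_0}|_v$, which is useful when $|\a_{j_0}|_v$ or $|\b_{i_0}|_v$ is small but fails near equality; the clean way around this is instead to note that the vectors $(\a_i)$ and $(\b_i)$ span a $2$-dimensional subspace precisely because $A\ne B$, and the collection of $2\times 2$ minors is (up to the standard comparison of norms on $\textstyle\bigwedge^2$) comparable to $\|(\a_i)\|_v\cdot\|(\b_i)\|_v\cdot|\sin\theta|$ where $\theta$ is the ``angle''; but since $A\ne B$ as points of $\PP^N$, and there are only finitely many coordinate patches, one gets a uniform bound. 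The cleanest argument: all norms on the finite-dimensional space $\textstyle\bigwedge^2 K_v^{N+1}$ are equivalent with constants depending only on $N$, so $\max_{i<j}|\a_i\b_j-\a_j\b_i|_v$ and the ``$\bigwedge^2$ sup-norm of the pure tensor'' differ by at most $c_N$; and since $\a\wedge\b\ne 0$ (this is where $A\ne B$ enters) one compares $\|\a\wedge\b\|$ with $\|\a\|\cdot\|\b\|$ — the ratio is at most $1$ always and the reverse bound with constant depending only on $N$ follows from compactness of the projective variety of pure tensors, i.e. the image of the Segre-type embedding, together with the fact that a continuous nowhere-zero function on a compact set is bounded below. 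This yields $\sum_{v\mid\infty} n_v(\log\max_i|\a_i|_v+\log\max_j|\b_j|_v-\log\max_{i<j}|\a_i\b_j-\a_j\b_i|_v)\le [K:\QQ]\log c_N$.

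Combining the two steps, the left-hand side of the desired inequality is at most
\[
  \frac{1}{[K:\QQ]}\sum_{v\in M_K} n_v\Bigl(\log\max_i|\a_i|_v + \log\max_j|\b_j|_v\Bigr) + \log c_N = h(A)+h(B)+\log c_N,
\]
which is the claim with $O(1)=\log c_N$ depending only on $N$. The main obstacle is the archimedean lower bound for the minors in terms of the product of the sup-norms: one must see that $A\ne B$ in $\PP^N$ forces a uniform (in the points, but allowed to depend on $N$) gap, and resist the temptation to prove it by a naive triangle-inequality estimate, which degenerates exactly when $A$ and $B$ are close but distinct. Phrasing it via equivalence of norms on $\bigwedge^2$ and compactness sidesteps this cleanly; alternatively one can rescale so that $\a,\b$ are unit vectors in the $\ell^\infty_v$ norm and invoke uniform continuity of $(\a,\b)\mapsto \max_{i<j}|\a_i\b_j-\a_j\b_i|_v$ on the (compact) set of pairs of unit vectors that are not proportional — except that set is not compact, so one really does need the projective/$\bigwedge^2$ formulation. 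A minor additional point is the bookkeeping converting between ideal norms and products of local sup-norms, which is standard but should be stated carefully because of the normalization of the local degrees $n_v$.
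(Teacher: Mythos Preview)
Your archimedean step is backwards, and the bound you attempt is in fact false. You try to prove $\max_{i<j}|\a_i\b_j-\a_j\b_i|_v \ge c_N^{-1}\max_i|\a_i|_v\cdot\max_j|\b_j|_v$ with $c_N$ depending only on~$N$; but take $\a=(1,0,\ldots,0)$ and $\b=(1,\e,0,\ldots,0)$ with $\e>0$. The only nonzero minor is $\a_0\b_1-\a_1\b_0=\e$, so the ratio is~$\e$, arbitrarily small even though $A\ne B$. Your compactness argument fails exactly where you already worried it might: the set of non-proportional pairs of unit vectors is not compact, and passing to $\bigwedge^2$ does not rescue it, since the continuous function $\|\a\wedge\b\|/(\|\a\|\,\|\b\|)$ genuinely tends to zero as $\a,\b$ approach proportionality. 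Worse, even if such a bound held it would only give an \emph{upper} bound on each archimedean summand~$T_v$, hence a \emph{lower} bound on the nonarchimedean sum---the wrong direction for what you need.

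The repair is easy and stays entirely within your setup. For archimedean~$v$ use the trivial triangle inequality $|\a_i\b_j-\a_j\b_i|_v\le 2\max_i|\a_i|_v\max_j|\b_j|_v$, which yields $T_v\ge-\log 2$ and hence the required \emph{lower} bound on the archimedean sum. Now observe that the sum of $n_vT_v$ over \emph{all} places equals $[K:\QQ]\bigl(h(A)+h(B)-h(C)\bigr)$, where $C=[\a_i\b_j-\a_j\b_i]_{i<j}$ is a well-defined point of $\PP^{\binom{N+1}{2}-1}(K)$ because $A\ne B$; since $h(C)\ge0$, subtracting off the archimedean part gives the left-hand side $\le h(A)+h(B)+\log 2$. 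For comparison, the paper takes a different route that avoids introducing~$C$: it uses local heights relative to closed subschemes, embeds the diagonal $\D\subset\PP^N\times\PP^N$ in a single divisor $D=\{x_iy_j=x_jy_i\}$ of type $(1,1)$ chosen so that $(A,B)\notin|D|$, invokes the functoriality $\l_\D\le\l_D+O_v(1)$, and sums over the nonarchimedean places.
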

\begin{proof}
We give a proof using the machinery of heights relative to closed
subschemes developed in~\cite{MR919501}, although for the specific
result that we need, some readers may prefer to write down a direct proof.
We assume that the absolute values on~$K$
have been normalized so as to obtain the absolute height,
i.e., the height of a point~$P=[x_0,\ldots,x_N]$
is given by
$h(P)=\sum_{v\in M_K}\max_i\{-v(x_i)\}$.
\par
Since~$A\ne B$, there are indices~$i$ and~$j$ such
that~$\a_i\b_j\ne\a_j\b_i$. Relabeling the coordinates, we may assume
without loss of generality that~$\a_0\b_1\ne\a_1\b_0$. We define 
subvarieties~$D$ and~$\D$ of~$\PP^N\times\PP^N$ by
\[
  D = \{ x_0y_1=x_1y_0\}
  \qquad\text{and}\qquad
  \D = \{ x_iy_j=x_jy_i~\text{for all $i$ and~$j$} \}.
\]
Thus~$\D$ is the diagonal of~$\PP^N\times\PP^N$, while~$D$ is a
divisor of type~$(1,1)$. In particular, if we let~$\pi_1$ and~$\pi_2$
be the projections~\text{$\PP^N\times\PP^N\to\PP^N$} and let~$H$ be a
hyperplane in~$\PP^N$, then~$D$ is linearly equivalent
to \text{$\pi_1^*H+\pi_2^*H$}.
\par
We also observe that~$\D\subset D$. It follows from~\cite{MR919501}
that
\begin{equation}
  \label{eqn:lDPv}
  \l_\D(P,v) \le \l_D(P,v) + O_v(1)
  \quad\text{for all 
     $P\in\bigl((\PP^N\times\PP^N)\setminus|D|\bigr)(K)$.}
\end{equation}
Here~$O_v(1)$ denotes an~$M_K$-bounded function (i.e., it is bounded by
an~$M_K$-constant) in the sense of Lang~\cite{LangDG}. Note that our
assumption on~$A$ and~$B$ ensures that~$(A,B)$ is not in the support
of~$D$. We also note that since~$D$ is an effective divisor, the local
height~$\l_D$ is bounded below by an~$M_K$-constant for all~$P$ not in
the support of~$D$. Hence evaluating~\eqref{eqn:lDPv} at~$P=(A,B)$ and
summing over all nonarchimedean~$v$ yields
\begin{align}
  \label{eqn:lDABvlehAhB}
  \sum_{v\in M_K^0} \l_\D\bigl((A,B),v\bigr) 
  &\le \sum_{v\in M_K^0} \bigl(\l_D\bigl((A,B),v\bigr) + O_v(1)\bigr) \notag\\*
  &\le \sum_{v\in M_K} \l_D\bigl((A,B),v\bigr) + O(1) \notag\\*
  &= h_D\bigl((A,B)\bigr) + O(1) \notag\\
  &= h_{\pi_1^*H+\pi_2^*H}\bigl((A,B)\bigr) + O(1) \notag\\
  &= h(A) + h(B) + O(1).
\end{align}
\par
It remains to compute the local height relative to the
diagonal. (In~\cite{MR919501},~$\l_\D$ is called an arithmetic distance
function.)  From~\cite{MR919501} and knowledge of the generators of the ideal
defining~$\D$, we see that a representative local height function
for~$\D$ is given by
\begin{equation}
  \label{eqn:lDABv}
  \l_\D\bigl((A,B),v\bigr) =
   \min_{0\le i<j\le N} v(\a_i\b_j-\a_j\b_i)
   - \min_{0\le i\le N} v(\a_i) - \min_{0\le i\le N} v(\b_i).
\end{equation}
Next we observe that for any nonzero ideal~$\gC=(\g_1,\ldots,\g_n)$,
we have
\[
  \frac{1}{[K:\QQ]}\log\Norm\gC = \sum_{v\in M_K^0} \min_{1\le i\le n} v(\g_i).
\]
(The $1/[K:\QQ]$ in front comes from the way that we have normalized
the valuations in order to simplify the formula for the absolute height.)
Hence when we sum~\eqref{eqn:lDABv} over all nonarchimedean places
of~$K$, we obtain
\[
  \sum_{v\in M_K^0} \l_\D\bigl((A,B),v\bigr) 
  = \frac{1}{[K:\QQ]}(\log\Norm\gD - \log\Norm\gA - \log\Norm\gB).
\]
Substituting this into~\eqref{eqn:lDABvlehAhB} yields the desired result.
\end{proof}

We conclude this section with an elementary result saying that every
point in~$\PP^N(K)$ has integral homogeneous coordinates that are
almost relatively prime.

\begin{lemma}
\label{lemma:intbasis}
Let~$K/\QQ$ be a number field and let~$R_K$ be the ring of integers
of~$K$. There is an integral ideal~$\gC=\gC(K)$ 
so that every~$P\in\PP^N(K)$ can be written using
homogeneous coordinates
\[
  P = [\a_0,\a_1,\ldots,\a_N]
\]
satisfying
\[
   \a_0,\ldots,\a_N\in R_K
  \qquad\text{and}\qquad
  (\a_0,\dots,\a_N)\bigm|\gC.
\]
\end{lemma}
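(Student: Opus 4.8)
The plan is to exploit the standard fact that the class group $\mathrm{Cl}(R_K)$ is finite, so that some fixed power of every ideal is principal. Given $P \in \PP^N(K)$, first write $P = [\a_0,\ldots,\a_N]$ with all $\a_i \in R_K$ (clear denominators), and let $\ga = (\a_0,\ldots,\a_N)$ be the ideal they generate. Scaling the $\a_i$ by any $c \in K^*$ replaces $\ga$ by $(c)\ga$, so what we can control about $\ga$ is exactly its ideal class. Thus it suffices to find, for each ideal class, a single integral ideal that divides a fixed $\gC$ and lies in that class.

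Concretely, I would proceed as follows. Let $h = \#\mathrm{Cl}(R_K)$. Then $\ga^h$ is principal, say $\ga^h = (\g)$ for some $\g \in R_K$. The new coordinates I want are obtained by choosing a suitable $c$; the natural choice is to make the scaled ideal equal to a chosen representative of the class of $\ga$. Pick once and for all, for each of the finitely many ideal classes, an integral ideal representative $\gb_1,\ldots,\gb_h$, and set $\gC = \gb_1 \gb_2 \cdots \gb_h$ (or their lcm). Given $P$ with associated ideal $\ga$, choose the representative $\gb_i$ with $[\gb_i] = [\ga]$; then $\gb_i = (c)\ga$ for some $c \in K^*$, i.e. $c\ga = \gb_i$ is integral. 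Replacing $\a_j$ by $c\a_j$ gives new homogeneous coordinates for the same point $P$, still in $R_K$ (since $c\ga \subseteq R_K$ forces each $c\a_j \in R_K$), and with $(c\a_0,\ldots,c\a_N) = \gb_i \mid \gC$. That is exactly the claimed normalization.

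The one point requiring a little care — and the likely main obstacle, though it is minor — is verifying that the $c$ produced really does land all coordinates back in $R_K$: this is immediate because $(c\a_0,\ldots,c\a_N) = c\ga = \gb_i$ is an integral ideal, hence contained in $R_K$, and it contains each generator $c\a_j$. One should also note that $\ga \ne (0)$ since $P$ is a genuine projective point (not all $\a_i$ vanish), so $\ga$ has a well-defined class. Finally, the ideal $\gC$ depends only on the chosen set of class representatives, hence only on $K$, as required.

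An alternative that avoids even naming a $c$: one can argue directly that in the free part, $\ga \cdot \ga^{h-1} = (\g)$ shows $\ga \mid (\g)$ after scaling, but the cleanest write-up is the class-representative argument above. I would present that version, with the finiteness of the class group cited as the one external input.
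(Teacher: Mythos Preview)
Your proof is correct and is essentially identical to the paper's own argument: fix integral representatives $\gb_1,\ldots,\gb_h$ for the ideal classes, set $\gC=\gb_1\cdots\gb_h$, and scale any integral coordinate vector by the $c\in K^*$ that carries its content ideal to the chosen representative. The paper presents exactly this, with your $c$ written as $\g$ and your $\gb_i$ as $\ga_j$.
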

\begin{proof}
Fix integral ideals~$\ga_1,\ldots,\ga_h$ that are representatives for
the ideal class group of~$R_K$. Given a point~$P\in\PP^N(K)$, choose
any homogeneous coordinates~$P=[\b_0,\ldots,\b_N]$. Multiplying the
coordinates by a constant, we may assume that~$\b_0,\ldots,\b_N\in
R_K$. The ideal generated by~$\b_0,\ldots,\b_N$ differs by a principal
ideal from one
of the representative ideals, say
\[
  (\g) (\b_0,\ldots,\b_N) =  \ga_j
  \quad\text{for some $\g\in K^*$.}
\]
Then each~$\g\b_i\in\ga_j\subset R_K$, so if we set~$\a_i=\g\b_i$,
then
\[
  P=[\a_0,\ldots,\a_N]\quad\text{with}\quad
  \a_0,\ldots,\a_N\in R_K\quad\text{and}\quad
  (\a_0,\ldots,\a_N) = \ga_j.
\]
Hence if we let~$\gC$ be the integral
ideal~$\gC=\ga_1\ga_2\cdots\ga_h$, then~$\gC$ depends only on~$K$, and
for any point~$P$ we have shown how to find homogeneous coordinates
in~$R_K$ such that the ideal generated by the coordinates
divides~$\gC$. 
\end{proof}

\section{An ideal characterization of orbit size}
\label{section:orbitsz}

In this section we estimate the size of a certain ideal having the
property that its prime divisors are the primes with~$m_\gp\le m$.

\begin{proposition}
\label{prop:gDm}
With notation as in Section~$\ref{section:setup}$ and
Theorem~$\ref{thm:nfdensity}$, there are a constant~$C=C(K,V,\f)$ and
a finite set of exceptional primes $\Scal=\Scal(K,V,\f)$ so that for
every~$m\ge1$ there exists a nonzero integral ideal~$\gD(m)$
satisfying the following two conditions\textup:
\begin{parts}
\Part{(i)}
For~$\gp\notin\Scal$ we have
$m_\gp \le m$ if and only if $\gp\mid\gD(m)$.
\vspace{5pt}
\Part{(ii)}
$\log\log\Norm\gD(m) \le C m$.
\end{parts}
\end{proposition}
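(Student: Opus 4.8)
The plan is to construct $\gD(m)$ explicitly as a product of "arithmetic distance" ideals measuring when two points in the orbit of $P$ collide modulo $\gp$. First I would recall that $m_\gp \le m$ means that among the reductions $\tilde P, \widetilde{\f(P)}, \dots, \widetilde{\f^m(P)}$ there is a repetition: there exist $0 \le i < j \le m$ with $\f^i(P) \equiv \f^j(P) \pmod{\gp}$ (for $\gp$ outside a finite bad set $\Scal$ where reduction or the morphism misbehaves). For each such pair, using Lemma~\ref{lemma:intbasis} I would choose homogeneous coordinates for $Q_i = \f^i(P)$ and $Q_j = \f^j(P)$ whose coordinate ideals divide the fixed ideal $\gC(K)$, and then form the ideal $\gD_{ij} = \bigl(a_k b_\ell - a_\ell b_k\bigr)_{0\le k<\ell\le N}$, where $(a_k)$ and $(b_k)$ are the chosen coordinates of $Q_i$ and $Q_j$. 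The key point is that $\gp \mid \gD_{ij}$ exactly when $\tilde Q_i = \tilde Q_j$ in $\PP^N(\FF_\gp)$, modulo the finitely many primes dividing $\gC$ (and those in $\Scal$). I would then set
\[
  \gD(m) = \gC^{?}\cdot\prod_{0\le i<j\le m} \gD_{ij},
\]
possibly multiplied by a fixed ideal supported on $\Scal$ so that condition (i) holds for \emph{all} $\gp \notin \Scal$; this takes care of part (i).

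For part (ii) I would estimate $\Norm\gD(m)$ by taking norms in the product. By Proposition~\ref{prop:gDgAgBhAhB}, for each pair $(i,j)$,
\[
  \frac{1}{[K:\QQ]}\log\Norm\gD_{ij}
  \le \frac{1}{[K:\QQ]}\log\bigl(\Norm\gA_i\cdot\Norm\gB_j\bigr)
      + h(Q_i) + h(Q_j) + O(1),
\]
and since the coordinate ideals divide $\gC$, the norm terms $\Norm\gA_i,\Norm\gB_j$ are bounded by $\Norm\gC$, hence contribute only an $O(1)$. Thus $\log\Norm\gD_{ij} \ll h(Q_i) + h(Q_j) + 1 \ll h\bigl(\f^i(P)\bigr) + h\bigl(\f^j(P)\bigr) + 1$. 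By Proposition~\ref{prop:hfnQlednhQC}, $h\bigl(\f^n(P)\bigr) \le d^n\bigl(h(P) + C\bigr)$, so $\log\Norm\gD_{ij} \ll d^m$ uniformly over the $O(m^2)$ pairs with $i < j \le m$. Summing,
\[
  \log\Norm\gD(m) \ll m^2 d^m + O(1) \ll d^{2m},
\]
and therefore $\log\log\Norm\gD(m) \le C m$ for a suitable constant $C = C(K,V,\f)$, which is exactly (ii).

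The main obstacle I expect is getting part (i) to hold cleanly for \emph{all} $\gp \notin \Scal$ rather than just generic $\gp$, and in particular handling the interaction between "good reduction" and the collision condition: I need $\tilde Q_i = \tilde Q_j$ in $\tilde V(\FF_\gp)$ to be detected exactly by $\gp \mid \gD_{ij}$. Two subtleties arise. First, the chosen coordinates are only almost coprime (their ideal divides $\gC$), so a prime $\gp \mid \gC$ could spuriously divide $\gD_{ij}$ even without a genuine collision — this is why $\Scal$ must be enlarged to include the divisors of $\gC$, after which the correspondence is exact. Second, $m_\gp \le m$ is about the orbit closing up \emph{within} the first $m$ steps, and I must make sure that a collision $\f^i(P) \equiv \f^j(P)$ with $j \le m$ is equivalent to the orbit size being $\le m$; this follows because $\tilde\f$ is a well-defined map on $\tilde V(\FF_\gp)$ for $\gp$ of good reduction, so once two orbit points agree the orbit is eventually periodic with the tail-plus-period length bounded by $j \le m$, and conversely a small orbit forces an early collision. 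Packaging these bad-prime bookkeeping issues into the single finite set $\Scal$, independent of $m$, is the only genuinely delicate part; everything else is a direct application of the height and norm estimates already established.
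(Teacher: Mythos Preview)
Your proposal is correct and follows essentially the same route as the paper's proof: build $\gD(m)$ as a product of collision ideals using the almost-coprime coordinates of Lemma~\ref{lemma:intbasis}, then bound the norm via Propositions~\ref{prop:gDgAgBhAhB} and~\ref{prop:hfnQlednhQC}. The only cosmetic difference is that the paper observes one need only include pairs $(s,m)$ with $0\le s<m$ (since any earlier collision $\f^i(P)\equiv\f^j(P)$ can be pushed forward to one at level exactly $m$ by applying $\f^{m-j}$), giving $m$ factors instead of your $\binom{m+1}{2}$; both choices yield $\log\Norm\gD(m)\ll d^{O(m)}$ and hence the same bound in~(ii).
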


\begin{remark}
If~$V$ is projective and~$\f$ is finite of degree~$d\ge2$,
then the following more precise version
of~\textup{(ii)} holds:
\[
  \log\log\Norm\gD(m) \le (\log d)m+C\log m.
\]
\end{remark}

\begin{proof}
By definition, $m_\gp(\f,P)$ is the smallest value of~$m$ such that there
exist~$r\ge1$ and~$s\ge0$ satisfying
\[
  r+s=m\qquad\text{and}\qquad \f^{r+s}(P)\equiv\f^s(P)\pmod{\gp}.
\]
Notice that~$s$ is the length of the tail and~$r$ is the length of the
cycle in the orbit~$\Orbit_{\tilde\f_\gp}(\tilde P\bmod \gp)$.
\par
We let~$\gC$ be the ideal described in
Lemma~\ref{lemma:intbasis}. Then for 
each~$n\ge0$ we can write
\[
  \f^n(P) = [A_0(n),A_1(n),\dots,A_N(n)]
\]
with~$A_i(n)\in R_K$ and such that the ideal
\[
  \gA(n) := \bigl(A_0(n),\dots,A_N(n)\bigr)
  \quad\text{divides the ideal~$\gC$.}
\]
\par
It follows that for all primes~$\gp\notdivide\gC$ we have
\begin{align*}
  \f^{r+s}(P)\equiv\f^s(P)&\pmod{\gp}  \\*
  &\quad\Longleftrightarrow\quad
   A_i(r+s)A_j(s) \equiv A_i(s)A_j(r+s)\pmod{\gp} \\*
   &\omit\hfil\quad\text{for all $0\le i<j\le N$.}
\end{align*}
Hence if we define ideals~$\gB(r,s)$ by
\[
  \gB(r,s) = \bigl(A_i(r+s)A_j(s) 
            - A_i(s)A_j(r+s)\bigr)_{0\le i<j\le N}
\]
and define~$\gD(m)$ to be the product
\[
  \gD(m) = \prod_{\substack{r\ge1,\,s\ge0\\r+s=m\\}} \gB(r,s),
\]
then for all primes~$\gp\notdivide\gC$ we have
\[
  m_\gp(\f,P)\le m \quad\Longleftrightarrow\quad
  \gp\mid\gD(m).
\]
Thus that~$\gD(m)$ has property~(i). Further, the assumption that~$P$
has infinite~$\f$-orbit tells us that
\[
  \f^{r+s}(P)\ne\f^s(P)\quad\text{for all~$r\ge1$ and~$s\ge0$,}
\]
so~$\gD(m)\ne0$.  
\par
It remains to estimate the norm of~$\gD(m)$.  We apply
Proposition~\ref{prop:gDgAgBhAhB}, which with our notation says that
\[
  \frac{1}{[K:\QQ]}
  \log\frac{\Norm\gB(r,s)}{\Norm\gA(r+s)\Norm\gA(s)}
  \le h\bigl(\f^{r+s}(P)\bigr) + h\bigl(\f^s(P)\bigr) + O(1).
\]
Using the fact that~$\Norm\gA(r+s)$ and~$\Norm\gA(s)$ are smaller
than~$\Norm\gC$, which only depends on~$K$, we find that
\[
  \frac{1}{[K:\QQ]}
  \log\Norm\gB(r,s)
  \le h\bigl(\f^{r+s}(P)\bigr) + h\bigl(\f^s(P)\bigr) + O(1).
\]
Next we apply Proposition~\ref{prop:hfnQlednhQC} to estimate the heights,
which gives
\[
  \log\Norm\gB(r,s)
  \le Cd^{r+s},
\]
where~$C=C(K,V,\f,P)$ and~$d=d(V,\f)\ge2$. The key point is that neither~$C$
nor~$d$ depends on~$r$ or~$s$.
\par
The ideal~$\gD(m)$ is a product of various~$\gB(r,s)$ ideals, so we obtain
\[
  \log\Norm\gD(m) 
  = \sum_{\substack{r\ge1, s\ge0\\r+s=m}}\log\Norm\gB(r,s)
  \le \sum_{\substack{r\ge1, s\ge0\\r+s=m}} Cd^{r+s}
  \le Cmd^{m+1} \le Cd^{2m}.
\]
Taking one more logarithm yields
\[
  \log\log\Norm\gD(m) \ll m,
\]
where the implied constant is independent of~$m$.
\end{proof}

An immediate corollary of Proposition~\ref{prop:gDm} is a weak
lower bound for~$m_\gp$.

\begin{corollary}
\label{cor:mpgecloglogp}
With notation as in Section~$\ref{section:setup}$, there is a
constant~$C=C(K,V,\f,P)$ so that
\[
  m_\gp(\f,P) \ge C \log\log\Norm\gp
  \quad\text{for all primes $\gp$.}
\]
\end{corollary}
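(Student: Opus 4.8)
The plan is to read the bound straight off of Proposition~\ref{prop:gDm}. Let $C_0=C_0(K,V,\f)$ and $\Scal=\Scal(K,V,\f)$ be the constant and the finite exceptional set furnished by that proposition. I will first treat a prime $\gp$ of good reduction with $\gp\notin\Scal$. Setting $m=m_\gp(\f,P)$, this is a finite integer with $m\ge1$ (the $\tilde\f_\gp$-orbit of $\tilde P$ is nonempty), so the ideal $\gD(m)$ is defined. Property~(i) of Proposition~\ref{prop:gDm} says that $m_\gp\le m$ forces $\gp\mid\gD(m)$, and since $\gD(m)$ is a nonzero integral ideal divisible by~$\gp$ we get $\Norm\gp\le\Norm\gD(m)$. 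Combining this with property~(ii) gives
\[
  \log\log\Norm\gp \;\le\; \log\log\Norm\gD(m) \;\le\; C_0\,m \;=\; C_0\,m_\gp(\f,P),
\]
so $m_\gp(\f,P)\ge C_0^{-1}\log\log\Norm\gp$ for all such~$\gp$.

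It then remains only to absorb the finitely many excluded primes into the constant. If $\gp$ has bad reduction then $m_\gp=\infty$ and the inequality is vacuous. For the finitely many $\gp\in\Scal$ of good reduction, note that $m_\gp\ge1$ always, while $\log\log\Norm\gp\le0$ whenever $\Norm\gp\le e$, so the only primes needing individual attention are the finitely many $\gp\in\Scal$ with $\Norm\gp>e$; for each of these the quantity $m_\gp(\f,P)/\log\log\Norm\gp$ is a positive real number. Taking $C=C(K,V,\f,P)$ to be the minimum of $C_0^{-1}$ and these finitely many positive ratios yields $m_\gp(\f,P)\ge C\log\log\Norm\gp$ for every prime~$\gp$.

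I do not expect a genuine obstacle here — the statement is, as advertised, an immediate corollary. The only place that calls for a moment's care is the sign of $\log\log\Norm\gp$ for small primes, which is precisely why the argument is phrased so as to dispose of those primes by shrinking the constant rather than by the inequality itself. (If one instead wanted an explicit $C$, one would need to track the implied constants through the proof of Proposition~\ref{prop:gDm} and through the height estimates of Propositions~\ref{prop:hfnQlednhQC} and~\ref{prop:gDgAgBhAhB}, but the qualitative statement needs none of this.)
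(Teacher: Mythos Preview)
Your proof is correct and follows essentially the same route as the paper's own argument: apply Proposition~\ref{prop:gDm} with $m=m_\gp$ to get $\gp\mid\gD(m_\gp)$, whence $\Norm\gp\le\Norm\gD(m_\gp)$, and combine with the bound $\log\log\Norm\gD(m)\le C_0 m$; then absorb the finitely many exceptional primes into the constant. Your treatment of the small-norm primes (where $\log\log\Norm\gp\le0$) is in fact more careful than the paper's, which simply says ``adjusting the constant to deal with the finitely many excluded primes completes the proof.''
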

\begin{proof}
For each~$m\ge1$, let~$\gD(m)$ be the nonzero the ideal described in
Proposition~\ref{prop:gDm}. Then for all but finitely many primes~$\gp$
we have
\[
  \gp\mid\gD(m_\gp)
  \qquad\text{and}\qquad
  \log\log\Norm\gD(m_\gp) \le C m_\gp.
\]
It follows that~$\Norm\gD(m_\gp)\ge\Norm\gp$, which gives the desired result
for~$\gp$. Adjusting the constant to deal with the finitely many excluded
primes completes the proof.
\end{proof}

\section{An analytic estimate}
\label{section:analyticest}

We now prove the key analytic estimate required for the proof of
Theorem~\ref{thm:nfdensity}.  This analytic result is a dynamical
analog of a theorem of Romanoff~\cite{MR1512916}, see
also~\cite{ErdosTuran1,ErdosTuran2,Landau,MR1395936}.  The proof,
especially insofar as we obtain an explicit dependence on~$s$, follows
the proof in~\cite{MR1395936}.

\begin{theorem}
\label{thm:analyticresult}
With notation as in Theorem~$\ref{thm:nfdensity}$, let~$\l\ge1$. Then
there is a constant~$C=C(K,V,\f,\l)$ so that
\begin{equation}
  \label{eqn:pmpsum1}
  \sum_{\gp\in\Spec R_K} 
  \frac{\log \Norm\gp}{\Norm\gp\cdot e^{sm_\gp^\l}}
  \le \frac{C}{s^{1/\l}}
  \quad\text{for all $s>0$.}
\end{equation}
\end{theorem}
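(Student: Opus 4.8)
The plan is to follow the Romanoff-style argument in~\cite{MR1395936}, using Proposition~\ref{prop:gDm} as the key arithmetic input. Fix $\l\ge1$. The first step is to replace the exponential weight $e^{-sm_\gp^\l}$ by a step function, dyadically decomposing the range of $m_\gp$. Write $S(s)$ for the sum on the left of~\eqref{eqn:pmpsum1}. Grouping primes according to the size of $m_\gp$, say $m_\gp\in[M,2M)$ for $M$ running over powers of $2$, we have on each such block $e^{-sm_\gp^\l}\le e^{-sM^\l}$, so
\[
  S(s) \le \sum_{M} e^{-sM^\l}
    \sum_{\substack{\gp:\,m_\gp<2M}} \frac{\log\Norm\gp}{\Norm\gp}.
\]
So everything comes down to bounding the inner sum $T(x):=\sum_{\gp:\,m_\gp\le x}\frac{\log\Norm\gp}{\Norm\gp}$ by something like $O(x)$ (up to the exceptional set~$\Scal$, which contributes a bounded amount and can be discarded).

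The second and central step is precisely this bound $T(x)\ll x$. By Proposition~\ref{prop:gDm}(i), for $\gp\notin\Scal$ the condition $m_\gp\le x$ (take $m=\lfloor x\rfloor$) is equivalent to $\gp\mid\gD(m)$, and Proposition~\ref{prop:gDm}(ii) gives $\log\log\Norm\gD(m)\le Cm\le Cx$, i.e. $\log\Norm\gD(m)\le e^{Cx}$. Since $\sum_{\gp\mid\gD(m)}\log\Norm\gp\le\log\Norm\gD(m)$, one is tempted to bound $T(x)$ by $\frac{1}{\min\Norm\gp}\log\Norm\gD(m)$, but that is far too lossy because of the $1/\Norm\gp$ weights — a product of norm $e^{Cx}$ could be one prime of that size, contributing a negligible $e^{Cx}\cdot e^{-Cx}$, or it could be many small primes. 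The right move, exactly as in~\cite{MR1395936}, is a Chebyshev/partial-summation argument: split the primes dividing $\gD(m)$ into those with $\Norm\gp\le y$ and those with $\Norm\gp>y$, for a threshold $y$ to be chosen (roughly $y=e^{Cx}$ or a power thereof). For the large primes, $\sum_{\gp\mid\gD(m),\,\Norm\gp>y}\frac{\log\Norm\gp}{\Norm\gp}\le\frac1y\log\Norm\gD(m)\le\frac1y e^{Cx}$, which is $O(1)$ for the right choice of $y$. For the small primes one simply uses the unconditional bound $\sum_{\Norm\gp\le y}\frac{\log\Norm\gp}{\Norm\gp}=\log y+O(1)$ (Mertens for the number field $K$), giving $O(\log y)=O(x)$. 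Combining, $T(x)\ll x+O(1)\ll x$ for $x\ge1$.

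The third step is to assemble the pieces. With $T(x)\ll x$, the dyadic sum gives
\[
  S(s) \ll \sum_{j\ge0} e^{-s\,2^{j\l}}\cdot 2^{j+1}
    \ll \sum_{j\ge0} 2^{j} e^{-s2^{j\l}}.
\]
Comparing this with the integral $\int_0^\infty 2^{t}e^{-s2^{t\l}}\,dt$ and substituting $u=2^{t\l}$, the integral evaluates to a constant times $\int_1^\infty u^{1/\l-1}e^{-su}\,du\le\int_0^\infty u^{1/\l-1}e^{-su}\,du=\G(1/\l)\,s^{-1/\l}$, which is exactly the claimed bound $C/s^{1/\l}$; the small-$j$ terms where $2^j\gg s^{-1/\l}$ contribute the constant near $j$ small and are harmless, while for $s$ large the whole sum is $O(e^{-s})$ and the bound is trivial. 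One should also absorb the contribution of $\gp\in\Scal$ and of $m_\gp=\infty$ (bad reduction) at the outset — these are finitely many primes contributing a bounded constant, which is dominated by $C/s^{1/\l}$ for $s$ bounded and can be handled separately for $s$ large.

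The main obstacle is the second step: getting the clean linear bound $T(x)\ll x$ rather than the naive exponential one. The subtlety is entirely in handling the $1/\Norm\gp$ weights correctly — recognizing that the "few large primes vs. many small primes" dichotomy lets one trade off a threshold $y$, so that the large-prime tail is controlled by the crude size bound on $\Norm\gD(m)$ while the small-prime part is controlled by Mertens' theorem for $K$. Once that estimate is in hand, the rest is the routine dyadic-to-integral comparison that produces the $s^{-1/\l}$ shape.
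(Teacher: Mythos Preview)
Your proposal is correct and follows essentially the same strategy as the paper: reduce to the inner-sum bound $T(x):=\sum_{m_\gp\le x}\frac{\log\Norm\gp}{\Norm\gp}\ll x$ via Proposition~\ref{prop:gDm} together with the standard estimate $\sum_{\gp\mid\gD}\frac{\log\Norm\gp}{\Norm\gp}\ll\log\log\Norm\gD$, and then finish with an integral comparison to extract the $s^{-1/\l}$ shape.

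The organizational differences are minor but worth noting. The paper uses Abel summation over all integers $m\ge1$, obtaining $S=\sum_{m\ge1}\bigl(G(m)-G(m+1)\bigr)T(m)$ with $G(t)=e^{-st^\l}$, then bounds $G(m)-G(m+1)\le s\l m^{\l-1}e^{-sm^\l}$ by the mean value theorem; this leads directly to $S\ll s\l\sum_m m^\l e^{-sm^\l}$, which is handled by a clean one-line integral estimate (Lemma~\ref{lemma:msum}). You instead use a dyadic decomposition $m_\gp\in[2^j,2^{j+1})$, which is slightly rougher but equally valid. For the key inner bound, the paper simply cites Lemma~\ref{lemma:sumlogp} as standard (from~\cite{MR1395936}), whereas you reprove it inline via the ``split at threshold~$y$'' Mertens argument---that is exactly how Lemma~\ref{lemma:sumlogp} is proven, so there is no genuine divergence here. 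The paper's Abel-summation packaging is a bit cleaner and avoids having to argue separately that the dyadic sum is comparable to its integral near the peak, but both routes land on the same $\G(1/\l)s^{-1/\l}$-type bound.
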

\begin{proof}
To ease notation, define functions~$g(t)$ and~$G(t)$ by
\begin{equation}
  \label{eqn:gtGt1}
  g(t) = \frac{\log t}{t}
  \quad\text{and}\quad
  G(t) = e^{-st^\l}.
\end{equation}
We use Abel summation to rewrite the series~$S(\f,P,\l,s)$ 
in~\eqref{eqn:pmpsum1} as follows:
\begin{align} 
  \label{eqn:SfadeAbel1}
  S(\f,P,\l,s) 
  &=   \sum_{\gp\in\Spec R_K} 
       \frac{\log \Norm\gp}{\Norm\gp\cdot e^{sm_p^\l}} \notag\\
  &= \sum_{\gp\in\Spec F_K} g(\Norm\gp)G(m_\gp) \notag\\
  &= \sum_{m\ge1} 
    \Bigl(\sum_{\substack{\gp\in\Spec R_K\\ m_\gp = m\\}} 
             g(\Norm\gp)G(m)\Bigr) \notag\\
  &= \sum_{m\ge1} G(m) 
     \Bigl(
       \sum_{\substack{\gp\in\Spec R_K\\ m_\gp \le m\\}} g(\Norm p) 
       - \sum_{\substack{\gp\in\Spec R_K\\ m_p \le m-1\\}} g(\Norm p) 
     \Bigr) \notag\\
  &= \sum_{m\ge1} \bigl( G(m) - G(m+1) \bigr)
        \sum_{\substack{\gp\in\Spec R_K\\ m_\gp \le m\\}} g(\Norm\gp).
\end{align}

The mean value theorem gives
\begin{align*}
  G(m)-G(m+1)
  \le\sup_{m<\theta<m+1} -G'(\theta)
  &= \sup_{m<\theta<m+1} s\l\theta^{\l-1}e^{-s\theta^\l}\\
  &= s\l m^{\l-1}e^{-sm^\l}.
\end{align*}
Substituting into~\eqref{eqn:SfadeAbel1} yields
\begin{equation}
  \label{eqn:Sfade2}
  S(\f,\a,\l,s)  \le 
   \sum_{m\ge1} s\l m^{\l-1}e^{-sm^\l}
        \sum_{\substack{\gp\in\Spec R_K\\ m_p \le m\\}} 
        \frac{\log \Norm\gp}{\Norm\gp}.
\end{equation}

To deal with the inner sum, we use two results.  The first,
Proposition~\ref{prop:gDm}, was proven earlier. The second is as follows.

\begin{lemma}
\label{lemma:sumlogp}
Let~$K/\QQ$ be a number field. 
There are constants~$c_1$ and~$c_2$, depending only on~$K$, so that
for all integral ideals~$\gD$ we have
\[
  \sum_{\gp|\gD} \frac{\log\Norm\gp}{\Norm\gp}
  \le c_1\log\log\Norm\gD + c_2.
\]
\end{lemma}
\begin{proof}
This is a standard result. See for
example~\cite[Corollary~2.3]{MR1395936} for a derivation and an
explicit value for~$c_1$.
\end{proof}

Using the two lemmas, we obtain the bound
\begin{align*}
  S(\f,&\a,\l,s)  \\*
   &\le \sum_{m\ge1} s\l m^{\l-1}e^{-sm^\l}
        \sum_{\substack{\gp\in\Spec R_K\\ m_p \le m\\}} 
        \frac{\log \Norm\gp}{\Norm\gp}
    \quad\text{from \eqref{eqn:Sfade2},} \\
   &= \sum_{m\ge1} s\l m^{\l-1}e^{-sm^\l}
        \sum_{\substack{\gp\in\Spec R_K\\ \gp|\gD(m) \\}} 
        \frac{\log \Norm\gp}{\Norm\gp}
    \quad\text{from Proposition~\ref{prop:gDm}(i),} \\
   &\le \sum_{m\ge1} s\l m^{\l-1}e^{-sm^\l}
        \bigl(c_1\log\log\Norm\gD(m)+c_2\bigr)
     \quad\text{from Lemma~\ref{lemma:sumlogp}} \\
   &\le Cs\l \sum_{m\ge1} m^\l e^{-sm^\l} 
    \quad\text{from Proposition~\ref{prop:gDm}(ii).}
\end{align*}
It remains to deal with this last series.  If~$\l=1$, then we can explicitly
evaluate the series, but this is not possible for general values
of~$\l$.  (For example, if~$\l=2$, then it is more-or-less a theta
function). Instead we use the following elementary estimate.

\begin{lemma}
\label{lemma:msum}
Fix~$\l>0$ and~$\m\ge 0$. There is a constant~$C=C(\l,\mu)$ so that
\[
  \sum_{m=1}^\infty m^\mu e^{-s m^\l} \le Cs^{-(\mu+1)/\l}
  \qquad\text{for all $s>0$.}
\]
\end{lemma}
\begin{proof}
We estimate
\begin{align*}
  \sum_{m=1}^\infty m^\mu e^{-s m^\l}
  &\le \int_0^\infty t^\mu e^{-s t^\l} \,dt \\
  &= s^{-(\mu+1)/\l}\int_0^\infty u^\mu e^{-u^\l}\,dt
    \quad\text{letting $u = s^{1/\l}t$.}
\end{align*}
The integral converges and is independent of~$s$.
\end{proof}

Applying Lemma~\ref{lemma:msum} with~$\mu=\l$ and substituting in
above yields
\[
  S(\f,\a,\l,s) \le C_1(K,V,\f) s\l \cdot C_2(\l)s^{-(\l+1)/\l}
    = C_3(K,V,\f,\l) s^{-1/\l}.
\]
This is the desired result.
\end{proof}

\section{Proof of Theorem~\ref{thm:nfdensity}}
\label{section:proofofmainthm}

We now use the analytic estimate provided by
Theorem~\ref{thm:analyticresult} to prove our main density results.

\begin{proof}[Proof of Theorem~\ref{thm:nfdensity}] 
(a) For any~$0<\g<1$ we let
\[
  \Pcal_\g = \bigl\{\gp\in\Spec(R_K) : m_\gp \le (\log\Norm\gp)^\g\bigr\}.
\]
Then for all~$s>0$ we have
\begin{align}
  \label{eqn:Csggesum}
  \frac{C}{s^{\g}}
  &\ge \sum_{\gp\in\Spec R_K} 
     \frac{\log\Norm\gp}{\Norm\gp\cdot e^{sm_\gp^{1/\g}}}
    &&\text{from Theorem~\ref{thm:analyticresult} with $\l=1/\g$,} \notag\\
  &\ge \sum_{\gp\in\Pcal_\g} 
     \frac{\log\Norm\gp}{\Norm\gp\cdot e^{sm_\gp^{1/\g}}} \notag\\
  &\ge \sum_{\gp\in\Pcal_\g} 
     \frac{\log\Norm\gp}{\Norm\gp\cdot e^{s\log\Norm\gp}}
    &&\text{by the definition of $\Pcal_\g$,} \notag\\
  &=  \sum_{\gp\in\Pcal_\g} 
     \frac{\log\Norm\gp}{(\Norm\gp)^{1+s}}.
\end{align}
Hence
\begin{align*}
  \densitysup(\Pcal_\g)
  &= \limsup_{s\to1^+} (s-1)\sum_{\gp\in\Pcal_\g} 
     \frac{\log\Norm\gp}{(\Norm\gp)^{s}}
   &&\text{by definition of upper density,} \\
  &= \limsup_{s\to0^+} s\sum_{\gp\in\Pcal_\g} 
     \frac{\log\Norm\gp}{(\Norm\gp)^{s+1}} 
    &&\text{replacing $s$ by $s+1$,} \\
  &\le \limsup_{s\to0^+} Cs^{1-\g}
    &&\text{from \eqref{eqn:Csggesum},} \\
  &= 0
    &&\text{since $\g<1$.}
\end{align*}
Since the density is always nonnegative, this proves
that~$\density(\Pcal_\g)=0$. This is equivalent to 
Theorem~\ref{thm:nfdensity}~(a),
which asserts that the complement of~$\Pcal_\g$ has density~$1$.
\par\noindent(b)
The proof is similar. Let~$\e>0$ and define
\[
  \Pcal_\e = \bigl\{\gp\in\Spec(R_K) : m_\gp \le \e\log\Norm\gp\bigr\}.
\]
Applying Theorem~\ref{thm:analyticresult} with $\l=1$ and using the
definition of~$\Pcal_\e$, we estimate
\begin{align*}
  \label{eqn:Csggesum1}
  \frac{C}{s}
  \ge \sum_{\gp\in\Spec R_K} 
     \frac{\log\Norm\gp}{\Norm\gp\cdot e^{sm_\gp}}
  \ge \sum_{\gp\in\Pcal_\e} 
     \frac{\log\Norm\gp}{\Norm\gp\cdot e^{sm_\gp}}
  &\ge \sum_{\gp\in\Pcal_\e} 
     \frac{\log\Norm\gp}{\Norm\gp\cdot e^{s\e\log\Norm\gp}} \\
  &=  \sum_{\gp\in\Pcal_\g} 
     \frac{\log\Norm\gp}{(\Norm\gp)^{1+s\e}}.
\end{align*}
Replacing~$s$ by~$s/\e$ yields
\begin{equation*}
  \label{eqn:Csggesum1}
  \frac{C\e}{s} \ge \sum_{\gp\in\Pcal_\g} 
     \frac{\log\Norm\gp}{(\Norm\gp)^{1+s}}.
\end{equation*}
Hence
\[
  \densitysup(\Pcal_\e)
  = \limsup_{s\to0^+} s\sum_{\gp\in\Pcal_\e} 
     \frac{\log\Norm\gp}{(\Norm\gp)^{s+1}} 
  \le C\e.
\]
It follows that the complement of~$\Pcal_\e$ has lower density at
least \text{$1-C\e$}.
\end{proof}

\section{Conjectures and Experiments}
\label{section:conjandexp}

The density estimate provided by Theorem~\ref{thm:nfdensity} is
probably far from the truth. If the $\f$-orbit of a point~$P$
in~$V(\FF_\gp)$ is truly a ``random map'' from~$V(\FF_\gp)$ to itself,
then the expected orbit length~$m_\gp$ should be on the order
of~$\sqrt{\#V(\FF_\gp)}\approx \Norm\gp^{\frac12\dim V}$.
(See~\cite{MR1083961,MR0119227} for statistical properties of orbits
of random maps and~\cite{MR1094034} for the analysis of orbits of certain
polynomial maps.) Thus we might expect the quantity
\begin{equation}
  \label{eqn:pXlogpmp2}
  \frac{1}{\log X}\sum_{\Norm\gp\le X}\frac{\log\Norm\gp}{m_\gp^{2/\dim V}}
\end{equation}
to be bounded as~$X\to\infty$.
(Note that $\frac{1}{\log X}\sum_{\Norm\gp\le
X}\frac{\log\Norm\gp}{\Norm\gp}\to1$.) 
\par
We tested this guess numerically for the case of quadratic polyomial
maps on~$\PP^1_\QQ$ by computing the value of~\eqref{eqn:pXlogpmp2}
for various polynomials~$\f(z)=z^2+c$ and using values of~$X$
encompassing every~$200$th rational prime up to~$20000$.  The results
are listed in Table~\ref{table:sumlogpovermp2}.
\par
The data in Table~\ref{table:sumlogpovermp2} is ambiguous; all five
columns are increasing, but it is not clear whether they are
approaching a finite bound. We extended the calculations for~$z^2+1$
up to~$X=50000$, and the value of the sum~\eqref{eqn:pXlogpmp2}
continues to grow as shown in
Table~\ref{table:sumlogpovermp2z21}. This suggests that~$m_\gp$ grows
somewhat slower than~$\sqrt{\Norm\gp}$.

\begin{table}[t]
\begin{tabular}{|c||c|c|c|c|c|} \hline
   $X$&  $z^2-2$&  $z^2-1$&   $z^2$& $z^2+1$& $z^2+2$ \\ \hline\hline
   1223&  1.4733&    1.6042&    1.4156&  1.3539&   1.3533 \\ 
   2741&  1.7770&    1.6576&    1.5116&  1.4089&   1.4232 \\ 
   4409&  1.9864&    1.6937&    1.5711&  1.5165&   1.4911 \\ 
   6133&  2.1050&    1.6964&    1.6860&  1.6068&   1.5751 \\ 
   7919&  2.1657&    1.7330&    1.8091&  1.6314&   1.5988 \\ 
   9733&  2.2507&    1.7372&    1.8555&  1.6596&   1.6148 \\ 
   11657& 2.2868&    1.7622&    1.8825&  1.7212&   1.6722 \\
   13499& 2.3366&    1.7782&    1.9351&  1.7223&   1.7049 \\
   15401& 2.3928&    1.8822&    1.9973&  1.7307&   1.7226 \\
   17389& 2.4279&    1.9119&    2.0528&  1.7376&   1.7475 \\
   19423& 2.4551&    1.9211&    2.0726&  1.7421&   1.7582 \\
  \hline
\end{tabular}
\caption{$\frac{1}{\log X}\sum_{p\le X}\frac{\log p}{m_p^2}$ 
for various $\f(z)$}
\label{table:sumlogpovermp2}
\end{table}

\begin{table}[t]
  \begin{tabular}{|c||c|c|c|c|c|c|} \hline
  $X$ &6133  & 13499   & 21383   & 29443   & 37813   & 46447   \\ \hline
  Avg &1.6068& 1.7223& 1.7627& 1.7790& 1.8092& 1.8398 \\ \hline
  \end{tabular}
\caption{$\frac{1}{\log X}\sum_{p\le X}\frac{\log p}{m_p^2}$ for $\f(z)=z^2+1$}
\label{table:sumlogpovermp2z21}
\end{table}

Emboldened by these few experiments, we are led to make the following
plausible conjecture.

\begin{conjecture}
Let~$K/\QQ$ be a number field, let~$\f:\PP^N\to\PP^N$ be a morphism of
degree~$d\ge2$, and let~$P\in\PP^N(K)$ be a point with
infinite~$\f$-orbit. Then for every~$\e>0$ the set
\[
  \bigl\{ \gp : m_\gp(\f,P) \ge \Norm\gp^{\frac{N}{2}-\e} \bigr\}
\]
is a set of density~$1$.
\end{conjecture}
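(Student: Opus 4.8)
The natural plan is to follow the template of the proof of Theorem~\ref{thm:nfdensity}. Writing $\pi_K(x)=\#\{\gp:\Norm\gp\le x\}$, it suffices to show that the set of ``bad'' primes
\[
  \bigl\{\gp:\Norm\gp\le x,\ m_\gp(\f,P)<\Norm\gp^{\frac N2-\e}\bigr\}
\]
has size $o(\pi_K(x))$. By Proposition~\ref{prop:gDm}(i) every such prime divides $\gD(m)$ for some $m<x^{\frac N2-\e}$, and the number of prime divisors of $\gD(m)$ is at most $\log_2\Norm\gD(m)$; so the whole question reduces to an estimate of the form $\#\{\gp:m_\gp(\f,P)\le m\}\ll m^{\theta}$ for a fixed exponent $\theta<2/N$, after which taking $m\approx x^{N/2-\e}$ closes the count. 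The obstruction is that the only control over $\Norm\gD(m)$ available here comes from the geometric height growth $h(\f^n(P))\ll d^n$ of Proposition~\ref{prop:hfnQlednhQC}, which yields merely $\log\log\Norm\gD(m)\ll m$, i.e.\ $\#\{\gp:m_\gp(\f,P)\le m\}\ll e^{Cm}$ --- larger than any polynomial in $m$ by an entire exponential. Hence the global-height sieve of this paper cannot by itself reach a power of $\Norm\gp$, and a proof must proceed differently.

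The route I would try is to argue mod $\gp$ rather than through the global point $\f^n(P)$, whose coordinates are far too large for a naive sieve to detect cancellation. Concretely, $m_\gp(\f,P)\le m$ says that within $s\le m$ steps the iterate $\tilde\f_\gp^s(\tilde P)$ lands on a point of exact period dividing some $r\le m-s$. If $\tilde\f_\gp$ behaves like a random self-map of the finite set $\PP^N(\FF_\gp)$ --- the belief that motivates the conjecture and the experiments in Section~\ref{section:conjandexp} --- then the orbit of a typical point has ``rho-length'' $\Theta(\Norm\gp^{N/2})$, so one expects the condition $m_\gp(\f,P)\le m$ to cut out only about $m^{2}/\Norm\gp^{N}$ of the residue classes; summing this prediction over $m<x^{N/2-\e}$ and $\Norm\gp\le x$ would give the required $o(\pi_K(x))$. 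Turning the prediction into a theorem requires, for each $m$, a genuine upper bound for the number of $\gp\le x$ at which $\tilde P$ happens to be one of the few points killed by time $m$ --- in effect a Chebotarev-type equidistribution statement for the action of $\Gal(\Kbar/K)$ on the orbit- and preimage-data attached to $(\f,P)$, or equivalently a large sieve adapted to orbit lengths.

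That equidistribution is the crux, and I expect it to be the real obstacle: nothing of the sort is known, and even in the simplest cases it borders on open problems. When $N=1$ and $\f(z)=az$ the analogue --- that $m_\gp(\f,1)=\ord_\gp(a)\ge\Norm\gp^{\frac12-\e}$ for a density-one set of $\gp$ --- is true, but only because in degree $1$ heights grow \emph{linearly}: as the remark after Proposition~\ref{prop:hfnQlednhQC} notes, one then has $\gD(m)=\prod_{k\le m}(a^{k}-1)$ with $\log\Norm\gD(m)\ll m^{2}$, so the crude count already yields $\#\{\gp:\ord_\gp(a)\le m\}\ll m^{2}$, which beats $\Norm\gp^{1/2}$. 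As soon as $\deg\f\ge2$ this elementary shortcut collapses and there is no substitute in sight; recovering any positive power of $\Norm\gp$ seems to demand real information about the reductions $\tilde\f_\gp$, e.g.\ through dynamical (arboreal) Galois representations or a dynamical large sieve. For that reason I regard the conjecture as well beyond the methods developed in this paper.
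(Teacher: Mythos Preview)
Your assessment is correct: the statement is labeled a \emph{Conjecture} in the paper, and the paper offers no proof of it. The surrounding section (Section~\ref{section:conjandexp}) presents only heuristic motivation (random-map statistics suggesting $m_\gp\approx\Norm\gp^{N/2}$) and some numerical experiments for $\f(z)=z^2+c$, then states the conjecture as ``plausible.'' So there is nothing to compare your argument against.

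Your diagnosis of \emph{why} the paper's methods fall short is also on point. The engine of the paper is Proposition~\ref{prop:gDm}, whose bound $\log\log\Norm\gD(m)\ll m$ is driven by the exponential height growth $h(\f^n(P))\ll d^n$ of Proposition~\ref{prop:hfnQlednhQC}. Feeding this into the analytic machinery of Theorem~\ref{thm:analyticresult} yields at best $m_\gp\gg(\log\Norm\gp)^{1-\e}$ for density-one sets of primes, exactly as in Theorem~\ref{thm:nfdensity}(a); reaching any positive power of $\Norm\gp$ would require $\log\Norm\gD(m)$ to grow polynomially in~$m$, which is ruled out by the height estimate when $\deg\f\ge2$. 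Your contrast with the degree-one case --- where linear height growth gives $\log\Norm\gD(m)\ll m^2$ and hence a genuine power saving, as in~\cite{MR1395936} --- is exactly the phenomenon flagged in the remark following Proposition~\ref{prop:hfnQlednhQC}.

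In short: the paper does not claim a proof, and you have correctly identified both the obstruction and the kind of new input (equidistribution or large-sieve information about reductions $\tilde\f_\gp$, possibly via arboreal Galois representations) that a proof would seem to require.
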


It is reasonable to pose a stronger question.  Is there
some~$\kappa>0$ so that the conjecture is true for the set of~$\gp$
satisfying
\[
  m_\gp \ge \Norm\gp^{\frac{N}{2}}\,(\log\Norm\gp)^{-\kappa}?
\]
And in the other direction, we ask whether for some constant~$C>0$,
the set of~$\gp$ satisfying~$m_\gp\ge C{\Norm\gp}^{N/2}$ is a set of
density~$0$.



\begin{thebibliography}{10}

\bibitem{MR1094034}
E.~Bach.
\newblock Toward a theory of {P}ollard's rho method.
\newblock {\em Inform. and Comput.}, 90(2):139--155, 1991.

\bibitem{ErdosTuran1}
P.~Erd\"os and P.~Turan.
\newblock Ein zahlentheoretischer {S}atz.
\newblock {\em Bull. de l'institut de Math. et M\^ec. a l'universit\'e
  Konybycheff de Tomsk}, I:101--103, 1935.

\bibitem{ErdosTuran2}
P.~Erd\"os and P.~Turan.
\newblock {\"U}ber die vereinfachung eines {L}andauschen {S}atzes.
\newblock {\em Bull. de l'institut de Math. et M\^ec. a l'universit\'e
  Konybycheff de Tomsk}, I:144--147, 1935.

\bibitem{MR1083961}
P.~Flajolet and A.~M. Odlyzko.
\newblock Random mapping statistics.
\newblock In {\em Advances in cryptology---EUROCRYPT '89 (Houthalen, 1989)},
  volume 434 of {\em Lecture Notes in Comput. Sci.}, pages 329--354. Springer,
  Berlin, 1990.

\bibitem{MR0119227}
B.~Harris.
\newblock Probability distributions related to random mappings.
\newblock {\em Ann. Math. Statist.}, 31:1045--1062, 1960.

\bibitem{MR1745599}
M.~Hindry and J.~H. Silverman.
\newblock {\em Diophantine geometry}, volume 201 of {\em Graduate Texts in
  Mathematics}.
\newblock Springer-Verlag, New York, 2000.
\newblock An introduction.

\bibitem{Landau}
E.~Landau.
\newblock Versch\"arfung eines {R}omanoffschen {S}atzes.
\newblock {\em Acta Arith.}, 1:43--62, 1935.

\bibitem{LangDG}
S.~Lang.
\newblock {\em Fundamentals of {D}iophantine {G}eometry}.
\newblock Springer-Verlag, New York, 1983.

\bibitem{MR1395936}
M.~R. Murty, M.~Rosen, and J.~H. Silverman.
\newblock Variations on a theme of {R}omanoff.
\newblock {\em Internat. J. Math.}, 7(3):373--391, 1996.

\bibitem{MR1512916}
N.~P. Romanoff.
\newblock \"{U}ber einige {S}\"atze der additiven {Z}ahlentheorie.
\newblock {\em Math. Ann.}, 109(1):668--678, 1934.

\bibitem{MR919501}
J.~H. Silverman.
\newblock Arithmetic distance functions and height functions in {D}iophantine
  geometry.
\newblock {\em Math. Ann.}, 279(2):193--216, 1987.

\end{thebibliography}
\end{document}